\newtheorem{thm}{Theorem}[section]
\newtheorem{cor}[thm]{Corollary}
\newtheorem{prop}[thm]{Proposition}
\theoremstyle{definition}
\newtheorem{defn}[thm]{Definition}
\newtheorem{rem}[thm]{Remark}
\newtheorem{ex}[thm]{Example}
\numberwithin{equation}{section}
\theoremstyle{plain}
\newcommand{\be}{\begin{equation}}
	\newcommand{\en}{\end{equation}}
\newcommand{\Lc}{{\mc L}}
\newcommand{\bei}{\begin{itemize}}
	\newcommand{\eni}{\end{itemize}}
\newcommand{\ip}[2]{\langle{#1}|{#2}\rangle}
\newcommand{\C}{\mathfrak{C}}
\numberwithin{equation}{section}
\newcommand{\mb}{\mathbb}
\newcommand{\mc}{\mathcal}
\newcommand{\eul}{\mathfrak}
\newcommand{\A}{\eul A}
\newcommand{\Ao}{{\eul A}_{0}}
\newcommand{\D}{\mc D}
\newcommand{\IA}{{\mathcal I}_{\Ao}^{\,\YY}(\A)}
\newcommand{\id}{{\sf e }}
\newcommand{\YY}{\mathfrak Y}
\newcommand{\KK}{\mathfrak K}
\newcommand{\B}{{\eul B}}
\newcommand{\ad}{^{\mbox{\scriptsize $\dag$}}}
\newcommand{\X}{{\mathfrak X}}
\newcommand{\Z}{{\mathfrak Z}}
\newcommand{\mult}{\,{\scriptstyle \square}\,}
\newcommand{\vp}{\Phi}
\newcommand{\LDK}{{\mathcal L}\ad(\D,\mathcal{K})}
\newcommand{\idop}{{\mb I}}
\def\MM{{\mathfrak M}}
\numberwithin{equation}{section}
\newcommand{\ppi}{\Pi}
\def\H{\mc H}
\def\x{\relax\ifmmode {\mbox{*}}\else*\fi}
\newcommand{\up}{\raisebox{0.7mm}{$\upharpoonright$}}
\newcommand{\vertiii}[1]{{\left\vert\kern-0.25ex\left\vert\kern-0.25ex\left|#1 
					\right\vert\kern-0.25ex\right\vert\kern-0.25ex\right\vert}}
\definecolor{Magenta}{rgb}{1,0,1}
\definecolor{ao(english)}{rgb}{1,0,0.4}
\newcommand{\BibTeX}{B\kern-0.1emi\kern-0.017emb\kern-0.15em\TeX}
\newcommand{\XYpic}{$\mathrm{X\kern-0.3em\raisebox{-0.18em}{Y}}$-$\mathrm{pic}\,$}
\newcommand{\ed}{\end{document}}
\begin{document}

%
%
%
%
%
%
%
%
%

	\title[]{BANACH BIMODULE-VALUED POSITIVE MAPS: INEQUALITIES AND INDUCED REPRESENTATIONS}

	\author[G. ~Bellomonte]{Giorgia Bellomonte}

%
\address{Dipartimento di Matematica e Informatica, Universit\`a degli Studi  di Palermo, Via Archirafi n. 34,  I-90123 Palermo, Italy}
\email{giorgia.bellomonte@unipa.it} 

%

\author[S. ~Ivkovi\'{c}]{Stefan Ivkovi\'{c}}

\address{Mathematical Institute of the Serbian Academy of Sciences and Arts, Kneza Mihaila 36, 11000 Beograd, Serbia}
\email{stefan.iv10@outlook.com}

\author[C. ~Trapani]{Camillo Trapani}
\address{Dipartimento di Matematica e Informatica, Universit\`a degli Studi  di Palermo, Via Archirafi n. 34,  I-90123 Palermo, Italy}
\email{camillo.trapani@unipa.it} 

\subjclass{Primary 46K10, 47A07, 16D10. }
\keywords{Representations, modules,
positive sesquilinear  maps, 
completely positive sesquilinear  maps, 
  normed spaces.
}
\date{\today}
\begin{abstract} Representations induced by  general positive sesquilinear maps with values in ordered Banach bimodules such as commutative and non-commutative $L^1$-spaces and the spaces of bounded linear operators from a $C^*$-algebra into the dual of  another $C^*$-algebra are considered.  As a starting point, a generalized Cauchy-Schwarz inequality is proved for these maps  and a representation of bounded positive maps from a (quasi) *-algebra into  such an ordered Banach bimodule is derived and some more inequalities for these maps are deduced. 
In particular,  an extension of Paulsen's modified Kadison-Schwarz inequality for 2-positive maps to the case of general positive maps from a unital *-algebra into the space of trace-class operators on a separable Hilbert space and into the duals of von-Neumann algebras is obtained.  Also, representations for completely positive maps with values in an ordered Banach bimodule and Cauchy-Schwarz inequality for infinite sums of such maps are provided. Concrete examples illustrate the results.
\end{abstract}
\label{page:firstblob}
\maketitle
 \section{Introduction} 

 Positive and completely positive maps play an important role in the theory of operator algebras  and quantum information, see for instance \cite{DKM}. 
 This motivate a study of  representations involving these maps. 
 A classical result in representation theory  is the famous Stinespring theorem \cite{stinespring} (see also \cite{Arveson,paulsen,stormer} which gives a representation of completely positive maps from a $C^*$-algebra into the space of bounded operators in Hilbert space. Quite recently, in \cite[Corollary 3.10]{BIvT1} and \cite[Corollary 3.3]{BDIv},  a representation of general positive $C^*$-valued maps on arbitrary *-algebras has also been provided. Now, $C^*$-algebras are just a special case of Banach quasi *-algebras and, moreover, every Banach quasi *-algebra is a Banach bimodule over a *-algebra. 
  Several beautiful and important mathematical structures (such as commutative and noncommutative $L^p$-spaces) are examples of Banach quasi *-algebras. Further, Hilbert $C^*$-modules \cite{Lance, MT} and the spaces of  bounded linear operators from a $C^*$-algebra into the dual of  another $C^*$-algebra are other examples of (ordered) Banach bimodules over a *-algebra which are of interest for applications.   All these facts motivated us to study positive and completely positive maps from a Banach quasi *-algebra into an (ordered) Banach bimodule over a *-algebra, and the main aim of this paper is therefore obtaining representations and related inequalities of such maps.
In a recent paper \cite{BIvT1}, we considered a GNS-like construction defined by positive sesquilinear maps on a quasi *-algebra taking their values in a $C^*$-algebra. In this paper we go some steps further and consider the possibility of replacing sesquilinear maps taking values in a $C^*$-algebra with sesquilinear maps with values in an ordered Banach bimodule $\YY$ over some *-algebra $\YY_0$. In this situation, an extension of the result of \cite{BIvT1} is possible, provided that the sesquilinear form $\Phi$, where we start from, satisfies a generalized Cauchy-Schwarz inequality. This discussion is developed in Proposition \ref{prop: 3.6 cases} in Section \ref{sect_3} where we characterize a class of ordered Banach bimodules $\YY$ with the property that every $\YY$-valued positive sesquilinear map satisfies a generalized Cauchy-Schwarz inequality.    
Concrete examples of such bimodules are, for instance, non-commutative $L^1$-spaces and the space of bounded linear operators from a von Neumann algebra into the dual of another von Neumann algebra. As a corollary, we  obtain an extension of Paulsen's modified Kadison-Schwarz inequality for 2-positive maps between $C^*$-algebras to the case of general positive maps from a unital *-algebra into any of the above mentioned ordered Banach bimodules (Corollary \ref{cor: positive boundd l}) as well as Cauchy-Schwarz inequality for infinite sums of ordered  Banach bimodule-valued positive sesquilinear maps (Proposition \ref{prop: 3.16 sequences}). Then we provide a representation of positive sesquilinear maps from a quasi *-algebra into such an ordered Banach bimodule. Moreover, we give examples of such maps. In the last part of Section \ref{sect_3}, we obtain also a representation of positive ordered Banach bimodule-valued linear maps on a unital *-algebra (Corollary \ref{cor: positive boundd l}).  An important difference between representations of ordered Banach bimodule-valued positive maps given in Section \ref{sect_3}  and the representations of $C^*$-valued positive maps given in \cite{BIvT1} is that the ones from  Section \ref{sect_3}  are given in a proper Banach space and not just in a quasi-Banach space (as in \cite{BIvT1}). This is due to the Cauchy-Schwarz inequality for positive   Banach bimodule-valued sesquilinear maps given in Proposition \ref{prop: 3.6 cases}. 
Positive maps from a unital $C^*$-algebra into the space of trace-class operators are a special case of the positive maps treated in Section \ref{sect_3}. Since the space of trace-class operators is the subspace of the $C^*$-algebra of bounded linear operators on a Hilbert space, it follows that our results have an application in the theory of positive maps on $C^*$-algebras.
 Section \ref{Sect_4} is devoted to the *-representations induced by completely positive  sesquilinear maps.
 This is actually a subject for which several approaches  have been proposed  in view of a possible generalization of the Stinespring theorem to different environments (see, for instance, \cite{Asad,Joita, heo, KK, PY, GMX, MJJ}).
 Motivated by \cite[Proposition 3.1]{PY}, following an idea proposed in \cite[Ch. 11]{schm_book} and adopted in \cite{BIT} for partial *-algebras, we consider completely positive sesquilinear maps from a normed quasi *-algebra $\A$ into a set of sesquilinear maps on a vector space which take values in an ordered Banach bimodule $\YY$ and provide a generalization  of the classical Stinespring theorem  in this context, as well as applications of this result to Cauchy-Schwarz inequality for infinite sums of such maps (Corollary \ref{cor: infinite sums}). Further, we provide a version of Radon-Nikodym theorem for these maps  as well as for   ordered Banach bimodule-valued bounded positive sesquilinear maps on a quasi *-algebra, thus generalizing, in this setting, the results from \cite[Section 3.5]{stormer}  and give  examples of such maps. At the end of the section, we apply our results to obtain representations of  bounded completely positive sesquilinear maps on a quasi *-algebra with values in the space of ordered Banach bimodule-valued,  bounded, linear operators.  As a consequence of these representations, in Proposition \ref{prop: von Neumann} we obtain Cauchy-Schwarz inequalities for such maps. Also, we provide some examples as well as some other applications.

 \section{Preliminaries} 

A {\em quasi *-algebra} $(\A, \A_0)$ is a pair consisting of a vector space $\A$ and a *-algebra $\A_0$ contained in $\A$ as a subspace and such that
\begin{itemize}
	\item $\A$ carries an involution $a\mapsto a^*$ extending the involution of $\A_0$;
	\item $\A$ is  a bimodule over $\A_0$ and the module multiplications extend the multiplication of $\A_0$. In particular, the following associative laws hold:
	\begin{equation}\notag \label{eq_associativity}
		(ca)d = c(ad); \ \ a(cd)= (ac)d, \quad \forall \ a \in \A, \  c,d \in \A_0;
	\end{equation}
	\item $(ac)^*=c^*a^*$, for every $a \in \A$ and $c \in \A_0$.
\end{itemize}

The
\emph{identity} or {\it unit element} of $(\A, \A_0)$, if any, is a necessarily unique element $\id\in \A_0$, such that
$a\id=a=\id a$, for all $a \in \A$.

We will always suppose that
\begin{align*}
	&ac=0, \; \forall c\in \A_0 \Rightarrow a=0 \\
	&ac=0, \; \forall a\in \A \Rightarrow c=0. 
\end{align*}
Clearly, both these conditions are automatically satisfied if $(\A, \A_0)$ has an identity $\id$.\\

A  quasi *-algebra $(\A, \A_0)$ is said to be  {\em normed} if $\A$ is a normed space, with a norm $\|\cdot\|$ enjoying the following properties
\begin{itemize}
	\item there exists $\gamma>0$ such that for every $a\in\A$ $$ 
\max\{\|ac\|, \|ca\|\}\leq \gamma \|a\|, \quad\forall c\in \A_0;$$
	\item $\|a^*\|=\|a\|, \; \forall a \in \A;$
	\item  $\A_0$ is dense in $\A[\|\cdot\|]$.
\end{itemize}
If the normed vector space $\A[\|\cdot\|]$ is complete, then $(\A, \A_0)$ is called a  {\em Banach  quasi *-algebra}. We refer to \cite{FT_book} for further details.

\begin{defn}
Let  $\YY$ be a Banach bimodule  over the  *-algebra $\YY_0$. 
We say that $\YY$ is  an {\em ordered Banach bimodule} over $\YY_0$ if
\begin{itemize}
\item[(i)]  $\YY$ is ordered as a vector space; that is,
$\YY$ contains  a (positive) closed cone $\KK$, i.e., $\KK\subset \YY$ is such that $\KK+\KK\subset \KK$, $\lambda\KK \subset \KK $ for $\lambda\geq 0$ and $\KK\cap (-\KK)=\{0\}$; 
\item[(ii)] $z^* \KK z \subset \KK, \quad \forall z\in \YY_0.$
\end{itemize} 
\end{defn}

    As usual, we will write $y_1\leq y_2$ whenever $y_2-y_1\in\KK$, with $y_1,y_2\in\YY$ and we will sometimes suppose that $\YY$ has an order-preserving norm in the sense that if $y_1\leq y_2$ with $y_1,y_2\in\YY$, then also $\|y_1\|_\YY\leq \|y_2\|_\YY$. 
Throughout the paper, $\Omega$ will denote a (locally) compact Hausdorff space and $C(\Omega)$ and $M(\Omega)$ will denote the space of continuous functions on $\Omega$ and  the Banach space of all complex Radon measures
on $\Omega$ equipped with the total variation norm, respectively. 
\begin{ex}\label{ex: submultiplicative}
 Examples of ordered Banach bimodules over a Banach *-algebra are provided by: \begin{itemize}\item if  $\H$ is a separable Hilbert space, and $\B_1(\H)$
denotes the space of all trace-class operators, then $\YY=(\B_1(\H), \|\cdot\|_1)$ is a Banach bimodule over the von Neumann algebra  $\YY_0=(\B(\H),\|\cdot\|)$  of bounded operators on $\H$;  
         \item the non commutative spaces $\YY= L^p(\rho)$,  over the  Banach *-algebra  $\YY_0= L^\infty(\rho)$, with $\rho$ a faithful finite trace on a von Neumann algebra $\MM$, see \cite[Section 5.6.1]{FT_book};
                  \item $\YY= \B(\C_1,\C_2)$, with $\C_1$ and $\C_2$ being $C^*$-algebras with unit, $\YY_0=\C_1$.   
        If $A\in\B(\C_1,\C_2)$ and $c\in\C_1$, the module multiplication can be defined as $$(A\cdot c)(d)=A(cd) \mbox{ and } (c\cdot A)(d)=A(dc),\quad  d\in\C_1.$$
        Similar construction applies if we instead of $\C_2$ consider the dual of $\C_2$;
 \item  $\YY=\YY_0= \ell_2(C(\Omega))$,  with $\ell_2(C(\Omega))$  the space of squared summable sequences of functions in $C(\Omega)$ with  $\Omega$ a compact Hausdorff space. It is a Banach bimodule over itself because it is a (non-unital) normed *-algebra.
     \end{itemize}  
 The definitions of the positive cones are obvious in all these cases.
 \end{ex}

  In the next results  we will need the additional assumption that the norm on the ordered Banach bimodule $\YY$ is order-preserving.
Examples of Banach bimodules with an
order-preserving norm are commutative $L^p$-spaces and $\ell_2(C(\Omega))$,  with $\Omega$  a compact Hausdorff space.  Furthermore, if $\C_1$ and $\C_2$ are $C^*$-algebras with unit $e_{\C_1}$ and $e_{\C_2}$, respectively, the Banach ordered bimodule $\B(\C_1,\C_2)$ is order-preserving (this follows from the fact that $\|\varphi\|=\|\varphi(e_{\C_1})\|_{\C_2}\leq\|\psi(e_{\C_1})\|_{\C_2}=\|\psi\|$, with $\varphi,\psi\in \B(\C_1,\C_2)^+$ and $\varphi\leq\psi$, see e.g. \cite{stormer}). Moreover, if $\H$ is a separable Hilbert space, the norm in $\B_1(\H)$ is clearly order-preserving.


Let $\X$ be a vector space and $\YY$ an ordered Banach bimodule over the *-algebra $\YY_0$, with positive cone $\KK$. Let $\vp$ be a $\YY$-valued positive  sesquilinear  map on   $\X\times\X$  $$\vp:(x_1,x_2)\in\X\times\X\to\vp(x_1,x_2)\in\YY$$ i.e., a map with the properties  \begin{itemize}
	\item[$i)$] $\vp(x_1,x_1)\in \KK$,
	\item[$ii)$]$\vp(\alpha x_1+\beta x_2,\gamma x_3)=\overline{\gamma}[\alpha\vp( x_1,x_3)+\beta \vp(x_2,x_3)]$,  
\end{itemize}
with $x_1, x_2,x_3 \in\X$ and $\alpha,\beta,\gamma\in\mathbb{C}$. \\
The $\YY$-valued positive  sesquilinear  map $\vp$ is called {\em faithful} if $$\vp(x,x)=0_\YY \;\Rightarrow\; x=0.$$

\section{Representations induced by positive  maps}\label{sect_3}

It is quite apparent that the cornerstone of every extension of the GNS-construction is a Cauchy-Schwarz inequality. We begin with presenting some results in this direction.\\

	Throughout this section we will assume that  $\C$ is a $C^*$-algebra with positive convex cone $\C^+$ and norm $\|\cdot\|_\C$, $ \YY$ is (an ordered) Banach bimodule over a  *-algebra $\YY_0$  (with $\YY_0$ equipped with a  not necessarily sub-multiplicative norm $\|\cdot\|_{\YY_0}$) with the respective cones $\KK$ and $\KK_0$, with $\KK_0=\{\sum_{i=1}^Nz_i^*z_i,\, z_i\in\YY_0, i=1, \dots, N;  N\in\mathbb{N}\}$.\\

 From now on, by $\rho$ we will denote a  faithful semifinite trace on a von Neumann algebra $\MM$.
\begin{prop}\label{prop: 3.6 cases}	   Let $\X$ be a complex vector space. Assume that $\YY$, 
    $\mathfrak{Y}_0$, 
    $\KK$ and $\C$ are as above. Let $\Phi : \X \times \X \to \YY$ be a positive  sesquilinear map. Let $\Omega, \Upsilon$ be  locally compact Hausdorff spaces,  $\MM$ a von Neumann algebra and  $\rho$ a faithful trace on  $\MM$.  Then, the Cauchy-Schwarz inequality holds in each one of the following cases  \begin{enumerate}
        \item  $\YY=L^2(\Omega)$,
         \item $\YY=\B(\MM, \B_1(\H))$, $\YY_0=\MM$,
         
        \item $\YY=\MM^*$, the dual of $\MM$,
                \item $\YY= M(\Omega)$ and $\YY_0= C(\Omega)$, \item $\YY=\B(\MM, M(\Omega))$, $ \YY_0=\MM$,
        \item $\YY=\B(C(\Omega),C(\Upsilon))$,
        or $\YY=\B(\MM, C(\Omega))$,
        \item $\YY=L^1(\rho)$ or $\YY=L^1(\Omega)$,
        \item  $\YY=\B(\MM,\widetilde{\MM}^*)$ and $\YY_0=\MM$, with $\widetilde{\MM}$  another von Neumann algebra,
        \item $\YY=\YY_0= \ell_2(C(\Omega))$.
    \end{enumerate} 
	\end{prop}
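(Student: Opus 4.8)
The plan is to establish, in every case, the norm form of the Cauchy--Schwarz inequality
\[
\|\Phi(x_1,x_2)\|_\YY^2\;\le\;\|\Phi(x_1,x_1)\|_\YY\,\|\Phi(x_2,x_2)\|_\YY,
\]
which is exactly what is needed to turn $x\mapsto\|\Phi(x,x)\|_\YY^{1/2}$ into a genuine seminorm and hence to run the GNS-type construction in a Banach (rather than quasi-Banach) space. The single engine behind all nine cases is \emph{scalarisation}: if $L\colon\YY\to\mathbb{C}$ is linear and positive (that is, $L(\KK)\subseteq[0,\infty)$), then $(x_1,x_2)\mapsto L(\Phi(x_1,x_2))$ is an ordinary scalar positive sesquilinear form, so classical Cauchy--Schwarz gives $|L(\Phi(x_1,x_2))|^2\le L(\Phi(x_1,x_1))\,L(\Phi(x_2,x_2))$; equivalently the $2\times2$ scalar matrix $[\,L(\Phi(x_i,x_j))\,]_{i,j=1,2}$ is positive semidefinite. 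Only the \emph{product} positivity $\Phi(\alpha_1x_1+\alpha_2x_2,\alpha_1x_1+\alpha_2x_2)\in\KK$ is used, which is all a positive sesquilinear map provides. The content of the proposition is then to repackage these scalar estimates into the norm of $\YY$, and for this the nine cases split into two families.

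First I would treat the commutative, ``pointwise'' spaces: $\YY=L^2(\Omega)$, $\YY=L^1(\Omega)$, $\YY=M(\Omega)$, $\YY=\ell_2(C(\Omega))$, and the sup-norm space $C(\Omega)$ (needed later as an auxiliary target). For $L^p(\Omega)$ and $C(\Omega)$ one evaluates pointwise: applying product positivity to a countable dense set of coefficients $\alpha$ shows $[\Phi(x_i,x_j)(\omega)]$ is positive semidefinite for a.e.\ $\omega$, whence $|\Phi(x_1,x_2)(\omega)|^2\le\Phi(x_1,x_1)(\omega)\,\Phi(x_2,x_2)(\omega)$; integrating (or taking the supremum) and using scalar $L^2$ Cauchy--Schwarz gives the claim, with a preliminary square root in the $L^1$ case. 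For $M(\Omega)$ one passes to a common dominating positive measure $\nu$ and replaces the $\mu_{ij}=\Phi(x_i,x_j)$ by Radon--Nikodym densities $f_{ij}\in L^1(\nu)$; product positivity forces $[f_{ij}(\omega)]$ positive semidefinite $\nu$-a.e., and $\int|f_{12}|\,d\nu\le(\int f_{11}\,d\nu)^{1/2}(\int f_{22}\,d\nu)^{1/2}$ is the desired inequality since $\int f_{ii}\,d\nu=\mu_{ii}(\Omega)=\|\mu_{ii}\|$. The space $\ell_2(C(\Omega))$ is handled identically, now coordinatewise in the sequence index and pointwise on $\Omega$, closing with Cauchy--Schwarz for $\ell^2$.

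Next I would reduce the operator/dual cases to the ones in hand. For $\YY=\B(\MM,Z)$ (cases (2),(5),(6),(8)), with $Z$ equal to $\B_1(\H)$, $M(\Omega)$, $C(\Omega)$ or $\widetilde{\MM}^*$, fix $a\in\MM$ with $a\ge0$: then $\Phi_a:=\Phi(\cdot,\cdot)(a)$ is a $Z$-valued positive sesquilinear map, so the corresponding $Z$-case (here $\B_1(\H)=L^1(\mathrm{Tr})$ and $\widetilde{\MM}^*$ being of noncommutative $L^1$/dual type, $M(\Omega)$ and $C(\Omega)$ pointwise) yields $\|\Phi(x_1,x_2)(a)\|_Z^2\le\|\Phi(x_1,x_1)(a)\|_Z\,\|\Phi(x_2,x_2)(a)\|_Z$. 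Positivity of the diagonal maps together with the order-preserving-norm discussion preceding the proposition gives $0\le\Phi(x_i,x_i)(a)\le\|a\|\,\Phi(x_i,x_i)(\id)$ and $\|\Phi(x_i,x_i)\|_\YY=\|\Phi(x_i,x_i)(\id)\|_Z$ (for unital $\MM$, or via an approximate unit), whence $\|\Phi(x_1,x_2)(a)\|_Z\le\|a\|\sqrt{\|\Phi(x_1,x_1)\|_\YY\,\|\Phi(x_2,x_2)\|_\YY}$ for $a\ge0$. The genuinely noncommutative cases $\YY=\MM^*$ and $\YY=L^1(\rho)$ (case (3) and the $L^1(\rho)$ part of (7)) are of the same shape with $Z=\mathbb{C}$: for $a\ge0$ the form $(x_1,x_2)\mapsto\rho(\Phi(x_1,x_2)a)$ (resp.\ $\Phi(x_1,x_2)(a)$) is scalar positive because the trace of a product of positives is nonnegative, producing the identical bound.

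The hard part, and the only real obstacle, is the passage from these positive-cone estimates to the full $\YY$-norm. Since the off-diagonal element $\Phi(x_1,x_2)$ is \emph{not} positive, its norm is a supremum over \emph{all} test elements $a\in\MM$ (equivalently over the whole unit ball of $Z^*$), not merely the positive ones, and a simple example ($\MM=\mathbb{C}^2$, $a=(1,-1)$) shows the supremum over positive $a$ can be strictly smaller than $\|\Phi(x_1,x_2)\|_\YY$. I would bridge this in two ways: decomposing a general $a=(a_1-a_2)+i(a_3-a_4)$ into positive parts with $\|a_k\|\le\|a\|$ transfers the estimate at once, at the cost of a universal constant; and, to retain the sharp constant, one invokes a Radon--Nikodym representation --- the measure-theoretic one already used for $M(\Omega)$, and Sakai's noncommutative Radon--Nikodym theorem for $\MM^*$ and $L^1(\rho)$ --- to produce a positive-semidefinite ``density'' of the matrix $[\Phi(x_i,x_j)]$ relative to $\sigma=\Phi(x_1,x_1)+\Phi(x_2,x_2)$, thereby reducing the off-diagonal estimate to the pointwise/$L^2$ situation already settled. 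Once this cone-to-norm reduction is in place, the remaining steps are routine scalar Cauchy--Schwarz and Hölder inequalities, so I expect essentially all the difficulty to be concentrated here.
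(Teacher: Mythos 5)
Your commutative, ``pointwise'' cases are sound: the a.e.\ positive semidefiniteness of $[\Phi(x_i,x_j)(\omega)]$ via a countable dense set of coefficients, the dominating-measure densities for $M(\Omega)$, and the coordinatewise argument for $\ell_2(C(\Omega))$ all work, and are if anything more elementary than the paper's versions (the paper proves case (1) by pairing with simple functions $S_n\to\overline{\Phi(x_1,x_2)}$ in $L^2$, and case (4) by partitions plus discrete Cauchy--Schwarz; these are equivalent to your route). The genuine gap is in the noncommutative cases (2), (3), (5), (7) for $L^1(\rho)$, and (8), at exactly the step you flag as ``the hard part'', and neither of your two bridges closes it. Bridge (a), decomposing $a$ into four positive parts, proves the inequality only up to a universal constant; this is strictly weaker than the statement, and the loss is fatal here: the constant-one Cauchy--Schwarz inequality is precisely what makes $x\mapsto\|\Phi(x,x)\|_\YY^{1/2}$ a seminorm, and hence what places the induced representation in a Banach rather than quasi-Banach space --- the stated improvement of this paper over \cite{BIvT1}.

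Bridge (b) is mathematically gapped. Sakai's theorem applies to single (hermitian) functionals dominated by $\sigma=\Phi(x_1,x_1)+\Phi(x_2,x_2)$, so it yields densities for the diagonal entries and for the real and imaginary parts of $\Phi(x_1,x_2)$ separately, but \emph{not} a joint positive semidefinite matrix density: the only positivity available is the scalar-weak one, $\sum_{i,j}\overline{\lambda_j}\lambda_i\,\Phi(x_i,x_j)\in\KK$ for $\lambda\in\mathbb{C}^2$, which is not positivity of an induced functional on $M_2(\MM)$ (that would be a $2$-positivity hypothesis $\Phi$ does not carry). Concretely, in the GNS representation of $\sigma$ with cyclic (hence separating for the commutant) vector $\xi$, the entries are implemented by commutant operators $H_1,H_2\geq0$ and $K$, and your hypotheses give exactly $H_1+2\,\mathrm{Re}(\overline{\lambda}K)+|\lambda|^2H_2\geq0$ for all $\lambda\in\mathbb{C}$ --- a numerical-radius-type constraint on $K$ which does \emph{not} force positivity of the $2\times2$ operator matrix with entries $H_1,K,K^*,H_2$: take $H_1=H_2=\idop$ and $K$ of numerical radius at most one but norm greater than one; this configuration is realizable by a positive sesquilinear map on $\X=\mathbb{C}^2$. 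So the announced reduction ``to the pointwise/$L^2$ situation'' has no support. The paper's bridge is different and lossless: it evaluates on \emph{orthogonal} families of projections summing to the identity, applies scalar Cauchy--Schwarz entrywise and then discrete Cauchy--Schwarz over the family --- this already yields the sharp bound for simple operators with \emph{arbitrary complex} coefficients --- and then reaches unitaries by spectral approximation, general contractions by the Russo--Dye theorem, and the trace norm in case (2) via the polar-decomposition partial isometry; case (7) is handled not by Sakai but by the isometric, positivity-preserving embedding $\iota(T)(S)=\rho(ST)$ of $L^1(\rho)$ into $\MM^*$, reducing it to case (3). Note also that your reduction of $\B(\MM,Z)$ to the $Z$-valued case controls $\|\Phi(x_1,x_2)(a)\|_Z$ only for positive $a$, so the same cone-to-norm gap recurs at the outer supremum; substituting the projection/Russo--Dye argument there as well would repair the proof.
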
\begin{proof}
	    	We just give a sketch of the proof of each case.\\
	 (1) Let $S$ be a simple function on $\Omega$, then $S=\sum_{j=1}^Nc_j\chi_{A_j}.$ Then, for each $j$, the map $\varphi_j(x_1,x_2)=\int_\Omega \Phi(x_1,x_2)\chi_{A_j}d\mu$, $x_1,x_2\in\X$, is a positive  sesquilinear form, so we can apply the Cauchy-Schwarz inequality and get:
    $$\left|\int_\Omega \Phi(x_1,x_2)Sd\mu\right|
    \leq \|\Phi(x_1,x_1)\|_2^{1/2}\|\Phi(x_2,x_2)\|_2^{1/2}\|S\|_2, \quad\forall x_1,x_2\in\X;$$\\
    Then consider a sequence $\{S_n\}_n$ of simple functions such that $S_n\to\overline{\Phi(x_1,x_2)}$ in $L^2(\Omega,\mu)$ as $n\to\infty$. By taking limits in the above inequality, we get the Cauchy-Schwarz inequality.
     \\(2) Let $P\in\MM$ and $Q\in \B(\H)$ be orthogonal projections. Define $\varphi_{P,Q}:\X\times\X \to\mathbb{C}$ by \begin{equation}
         \label{eq: phiPQ}\varphi_{P,Q}(x_1,x_2)=tr(Q(\Phi(x_1,x_2)(P))),\quad \forall x_1,x_2\in\X.
     \end{equation} Then $\varphi_{P,Q}$ is a positive sesquilinear form; hence, using the Cauchy-Schwarz inequality, we get
 \begin{multline}\label{eq: trace}     
   |tr(Q(\Phi(x_1,x_2)(P)))|\\\leq(tr(Q(\Phi(x_1,x_1)(P))))^{1/2}(tr(Q(\Phi(x_2,x_2)(P))))^{1/2}
\end{multline}for all $x_1,x_2\in\X$. Let now $S\in \MM$ and $T\in \B(\H)$ be two simple operator functions, so $S=\sum_{i=1}^N\alpha_iP_i$
and  $T=\sum_{j=1}^M\beta_j Q_j$ with $P_{i_1}P_{i_2}=Q_{j_1}Q_{j_2}=0$ whenever $i_1\neq i_2$ and $j_1\neq j_2$,   $\sum_{i=1}^N P_i={\idop_{\MM}}$ and $\sum_{j=1}^M Q_j={\idop_{\H}}$. By \eqref{eq: trace} and some computations we get that      \begin{align}\label{eq: ineq*}
     |&tr(T(\Phi(x_1,x_2)(S)))|\leq\sum_{i,j}|\alpha_i||\beta_j||tr(Q_j(\Phi(x_1,x_2)(P_i)))|\\&\nonumber\qquad\leq \|T\|\|S\| (tr(\Phi(x_1,x_1)(\idop_{\MM})))^{1/2}(tr(\Phi(x_2,x_2)(\idop_{\MM})))^{1/2}\\&\nonumber\qquad\leq\|T\|\|S\|\|\Phi(x_1,x_1)\|^{1/2}\|\Phi(x_2,x_2)\|^{1/2},
\quad \forall x_1,x_2\in\X.
\end{align} Let now $U\in\MM$ and $W\in\B(\H)$ be unitary. Then, by the spectral theorem, we can find sequences $\{S_n\}_n\subset\MM$ and $\{T_n\}_n\subset\B(\H)$ that converge in the operator norm to $U$ and $W$, respectively.    

Then, since \begin{align}\label{eq: once again}
     |&tr(B(\Phi(x_1,x_2)(A)))|\leq tr(|B(\Phi(x_1,x_2)(A))|)\\&\nonumber\qquad\leq \|B\| \|\Phi(x_1,x_2)(A)\|_1\leq  \|B\|\|A\| \|\Phi(x_1,x_2)\|,
\end{align} for all $A\in \MM$, $B \in \B(\H)$, $x_1,x_2\in\X$, it follows that \begin{equation*}
     tr(T_n(\Phi(x_1,x_2)(S_n)))\to tr(W(\Phi(x_1,x_2)(U))),\, \mbox{ as }n\to\infty.
\end{equation*}  Also, $\|T_n\|\|S_n\|\to \|W\|\|U\|=1$ as $n\to\infty$, hence, by using  inequality \eqref{eq: ineq*}, passing to the limits, we deduce that \begin{equation}\label{eq: another ineq}
     |tr(W(\Phi(x_1,x_2)(U)))|\leq\|\Phi(x_1,x_1)\|^{1/2}\|\Phi(x_2,x_2)\|^{1/2},
\end{equation}  for all $x_1,x_2\in\X$. This inequality extends easily to convex combinations $\sum_{i=1}^N\lambda_iU_i$, $\sum_{j=1}^M\gamma_jW_j$. 

\noindent The Russo-Dye theorem then allows to get the following inequality
for $A\in\MM$ and $B\in\B(\H)$ with $\|A\|,\|B\|<1$, 
\begin{multline}
    \label{eq: I cba}
|tr(B(\Phi(x_1,x_2)(A)))|\\\leq\|\Phi(x_1,x_1)\|^{1/2}\|\Phi(x_2,x_2)\|^{1/2}, \quad \forall  x_1,x_2\in\X.\end{multline} Finally, let $A\in\MM$ with $\|A\|=1$, and $V\in\B(\H)$ be the partial isometry with $ V(\Phi(x_1,x_2)(A))=|\Phi(x_1,x_2)(A)|$ and choose sequences $\{A_n\}_n\subset\MM$ and $\{V_n\}_n\subset\B(\H)$, with $\|A_n\|,\|V_n\|<1$ for all $n$, such that $A_n\to A$ and $V_n\to V$ in the operator norm as $n\to\infty$. Once again, by using \eqref{eq: once again} and a limit procedure we get 
\begin{equation*}
   \|\Phi(x_1,x_2)(A)\|_1=   tr(V(\Phi(x_1,x_2)(A)))\leq\|\Phi(x_1,x_1)\|^{1/2}\|\Phi(x_2,x_2)\|^{1/2},
\end{equation*} for all  $x_1,x_2\in\X$. Finally,  by taking the supremum over the unit ball in $\MM$, we conclude that \begin{equation*}
\|\Phi(x_1,x_2)\|\leq\|\Phi(x_1,x_1)\|^{1/2}\|\Phi(x_2,x_2)\|^{1/2},
\quad \forall x_1,x_2\in\X.\end{equation*}
\noindent (3)
It is a special case of $(2)$: if $\H$ is one dimensional, then $\B_1(\H)=\mathbb{C}$.

    \noindent (4)      We can apply the Cauchy-Schwarz inequality to every positive sesquilinear form $\varphi_E(x_1,x_2)=\Phi(x_1,x_2)(E)$, with $E$ a given Borel subset of $\Omega$, and therefore by  considering a   partition $\{E_n\}_{n=1}^N$   of $\Omega$  we obtain \begin{align*}
        \sum_{n=1}^N|\Phi(x_1,x_2)(E_n)|&\leq  \sum_{n=1}^N(\Phi(x_1,x_1)(E_n))^{1/2}(\Phi(x_2,x_2)(E_n))^{1/2}\\&\leq\left(\sum_{n=1}^N\Phi(x_1,x_1)(E_n)\right)^{\frac{1}{2}}\left(\sum_{n=1}^N\Phi(x_2,x_2)(E_n)\right)^{\frac{1}{2}}
	\end{align*} hence, by taking the supremum of both sides, we get the Cauchy-Schwarz inequality.\\ \noindent (5)
Let $P\in\MM$ be an orthogonal projection and $E\subset$ be a Borel subset of $\Omega$. The  map $\varphi_{P,E}:\X\times \X\to \mathbb{C}$ given by $$\varphi_{P,E}(x_1,x_2)=\Phi(x_1,x_2)(P)(E), \quad\forall x_1,x_2\in\X$$ is a positive sesquilinear form, hence, by similar arguments as in the proof of $ (4)$ we can deduce that $$\|\Phi(x_1,x_2)(P)\|\leq(\Phi(x_1,x_1)(P))^{1/2}(\Phi(x_2,x_2)(P))^{1/2}, \quad\forall x_1,x_2\in\X.$$ Since this holds for an arbitrary orthogonal projection $P\in\MM$, we can then proceed further in a similar way as in the proof of $(2)$ and deduce Cauchy-Schwarz inequality for $\Phi$. We leave the details to the reader. \\
	         (6)   The map $\widetilde{\varphi}_t:\X\times \X\to (C(\Omega))^*\cong M(\Omega)$ given by$$\widetilde{\varphi}_t(x_1,x_2)(f)=(\Phi(x_1,x_2)(f))(t), \quad \forall f\in C(\Omega), x_1,x_2\in\X$$ is sesquilinear and positive, hence, the Cauchy-Schwarz inequality holds for $\widetilde{\varphi}_t$ by case (4). Therefore, for every $f\in C(\Omega)$, with $\|f\|\leq1$ and every $t\in\Upsilon$, we get \begin{align*}
                 &\left|(\Phi(x_1,x_2)(f))(t)\right|\leq\|\widetilde{\varphi}_t(x_1,x_2)\|\leq\|\widetilde{\varphi}_t(x_1,x_1)\|^{1/2}\|\widetilde{\varphi}_t(x_2,x_2)\|^{1/2}\\&\qquad\quad\leq
             \left(\sup_{\substack{g\in C(\Omega), \\\|g\|_\infty\leq1}}\|\Phi(x_1,x_1)(g)\|_\infty\right)^{1/2}\left(\sup_{\substack{h\in C(\Omega), \\\|h\|_\infty\leq1}}\|\Phi(x_2,x_2)(g)\|_\infty\right)^{1/2}\\&\qquad\quad\leq \|\Phi(x_1,x_1)\|^{1/2}\|\Phi(x_2,x_2)\|^{1/2}.\end{align*}  The statement follows by taking supremums.\\ The case $\YY=\B(\MM, C(\Omega))$ can be treated similarly by using (3).\\
               (7) 
               Consider the map $\iota:L^1(\rho)\to\widetilde{\MM}^*$ given by $$\iota(T)(S)=\rho(ST),\quad\forall \,T\in L^1(\rho), S\in\MM.$$    
              It is not hard to check that $\iota$ is an isometry. Moreover, since $\iota(T)(S)=\rho(S^{1/2}TS^{1/2})\geq0$ whenever $S$ and $T$ are positive, it follows that $\iota$ preserves positivity. Hence, if $\Phi:\X\times\X\to L^1(\rho)$ is a positive sesquilinear map, then $\iota\circ\Phi:\X\times\X\to\MM^*$ is also a positive sesquilinear map. By (3) and since $\iota$ is an isometry, we get that \begin{align*}
\|\Phi(x_1,x_2)\|_1&=\|\iota\circ\Phi(x_1,x_2)\|\leq  \|\iota\circ\Phi(x_1,x_1)\|^{1/2}\|\iota\circ\Phi(x_2,x_2)\|^{1/2}\\&=\|\Phi(x_1,x_2)\|_1^{1/2}\|\Phi(x_1,x_2)\|_1^{1/2},\quad \forall x_1,x_2\in\X.
              \end{align*}
Of course, an analogous result holds in the commutative case (i.e. $\YY=L^1(\Omega)$).\\
(8) In the proof of $(2)$, given $P\in \MM$ and $Q\in \B(\H)$, we have considered the sesquilinear form $\varphi_{P,Q}$ in \eqref{eq: phiPQ}. Now, given $S\in \MM$ and $T\in\widetilde{\MM}$, we can consider, instead, the sesquilinear form $\varphi_{S,T}:\X\times \X\to \mathbb{C}$ given by $$\varphi_{S,T}(x_1,x_2)=\Phi(x_1,x_2)(S)(T),\quad\forall x_1,x_2\in\X$$
and proceed analogously to the proof of $(2)$.
\\
(9) We can apply the Cauchy-Schwarz inequality to all the positive sesquilinear forms  $\varphi_{n,t}(x_1,x_2)=(\Phi(x_1,x_2))_n(t)$ with $(\Phi(x_1,x_2))_n$ the n-th component of the sequence $\Phi(x_1,x_2)$ and then applying the Cauchy-Schwarz inequality for the inner product in  $\ell_2(C(\Omega))$.
  \end{proof}
From now on $\YY$ will denote any Banach bimodule considered in Proposition \ref{prop: 3.6 cases}.
  The next corollary is motivated by modified Schwarz inequality for 2-positive maps (see \cite[Ch. 3, Ex. 3.4, p.40]{paulsen} and \cite[Lemma 2.6]{bhatt})
   \begin{cor}\label{cor: positive boundd l}
     If $\A$ is a unital *-algebra and $\omega:\A\to\YY$ is a positive linear map, then $$\|\omega(b^*a)\|_\YY\leq\|\omega(a^*a)\|_\YY^{1/2}\|\omega(b^*b)\|_\YY^{1/2},\quad\forall a,b\in \A.$$\end{cor}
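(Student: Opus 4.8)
The plan is to read the scalar-looking expression $\omega(b^*a)$ as the value of a single $\YY$-valued positive sesquilinear map and then to quote the Cauchy--Schwarz inequality already secured in Proposition \ref{prop: 3.6 cases}. Concretely, I would apply that proposition with the complex vector space $\A$ in the role of $\X$ and with the map
$$\Phi:\A\times\A\to\YY,\qquad \Phi(a,b)=\omega(b^*a),\quad a,b\in\A.$$

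First I would check that $\Phi$ fits the definition of a $\YY$-valued positive sesquilinear map from Section \ref{sect_3}, i.e.\ that it is linear in the first slot and conjugate-linear in the second. Linearity in the first variable is immediate from the linearity of $\omega$ together with the linearity of left multiplication, since $\Phi(\alpha a_1+\beta a_2,b)=\omega\big(b^*(\alpha a_1+\beta a_2)\big)=\alpha\,\omega(b^*a_1)+\beta\,\omega(b^*a_2)$. For the second variable, using $(\gamma b)^*=\overline{\gamma}\,b^*$ and again the linearity of $\omega$, one obtains $\Phi(a,\gamma b)=\omega(\overline{\gamma}\,b^*a)=\overline{\gamma}\,\Phi(a,b)$, so condition $ii)$ of the definition holds. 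Positivity, condition $i)$, is equally direct: for every $a\in\A$ the element $a^*a$ is a positive element of $\A$, and since $\omega$ is a positive linear map its value lands in the cone, whence $\Phi(a,a)=\omega(a^*a)\in\KK$.

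Having identified $\Phi$ as a $\YY$-valued positive sesquilinear map, and recalling that $\YY$ denotes one of the ordered Banach bimodules listed in Proposition \ref{prop: 3.6 cases}, I would invoke the generalized Cauchy--Schwarz inequality supplied by that proposition, which gives
$$\|\Phi(a,b)\|_\YY\leq\|\Phi(a,a)\|_\YY^{1/2}\,\|\Phi(b,b)\|_\YY^{1/2},\qquad a,b\in\A.$$
Substituting the definition of $\Phi$ back in, the left-hand side is $\|\omega(b^*a)\|_\YY$, while $\Phi(a,a)=\omega(a^*a)$ and $\Phi(b,b)=\omega(b^*b)$, so the inequality reads exactly $\|\omega(b^*a)\|_\YY\leq\|\omega(a^*a)\|_\YY^{1/2}\|\omega(b^*b)\|_\YY^{1/2}$, which is the assertion.

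There is no genuine analytic obstacle at this stage, since all of the substantial work is carried out in Proposition \ref{prop: 3.6 cases}; the entire content of the corollary is the observation that $(a,b)\mapsto\omega(b^*a)$ is positive and sesquilinear, so that the earlier inequality applies verbatim. The only points deserving care are bookkeeping matters: tracking which argument carries the involution and the complex conjugation so that the sesquilinearity convention of Section \ref{sect_3} is matched, and noting that the unitality of $\A$ is in fact not used in the argument and serves only to place the statement within the Kadison--Schwarz circle of ideas recalled before the corollary.
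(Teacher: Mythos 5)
Your proof is correct and is essentially identical to the paper's: the authors likewise define $\Phi(a,b)=\omega(b^*a)$ and invoke Proposition \ref{prop: 3.6 cases}, leaving the routine verification of sesquilinearity and positivity (which you spell out) implicit. Your side remark that unitality of $\A$ is not actually needed for this argument is accurate, since Proposition \ref{prop: 3.6 cases} only requires a positive sesquilinear map on a complex vector space.
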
\begin{proof}
         This follows from Proposition \ref{prop: 3.6 cases} by considering the  sesquilinear map
  $\Phi:\A\times \A\to\YY$ given by $\Phi(a,b)=\omega(b^*a)$, for all $a,b\in\A$.
     \end{proof} 
  Now we give an extension of Proposition \ref{prop: 3.6 cases}.
   \begin{prop}\label{prop: 3.16 sequences}
      Let $\X$ be a vector space and assume that the norm on $\YY$ is order-preserving. If $\{\Phi_n\}_n$ is a sequence of positive sesquilinear maps $\Phi_n:\X\times\X\to\YY$ and $\{x_n\}_n$, $\{\widetilde{x}_n\}_n$ are sequences in $\X$ such that the series $\sum_{n=1}^\infty \Phi_n(x_n,x_n)$ and $\sum_{n=1}^\infty \Phi_n(\widetilde{x}_n,\widetilde{x}_n)$ are convergent in $\YY$ then $$
\left\|\sum_{n=1}^\infty \Phi_n(x_n,\widetilde{x}_n)\right\|_\YY\leq \left\|\sum_{n=1}^\infty \Phi_n(x_n,x_n)\right\|_\YY^{1/2}\left\|\sum_{n=1}^\infty \Phi_n(\widetilde{x}_n,\widetilde{x}_n)\right\|_\YY^{1/2}.
$$
  \end{prop}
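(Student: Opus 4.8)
The plan is to reduce the infinite-sum statement to the single-map Cauchy--Schwarz inequality of Proposition \ref{prop: 3.6 cases}, first in a finite-sum form and then by a limiting argument. As a first step I would fix $N\in\mathbb{N}$ and package the maps $\Phi_1,\dots,\Phi_N$ into a single positive sesquilinear map on the direct sum $\X^N=\X\oplus\cdots\oplus\X$, namely $\Psi:\X^N\times\X^N\to\YY$ given by
\[
\Psi\big((a_1,\dots,a_N),(b_1,\dots,b_N)\big)=\sum_{n=1}^N\Phi_n(a_n,b_n).
\]
Sesquilinearity of $\Psi$ is inherited from the $\Phi_n$, and positivity follows because $\Psi(\mathbf a,\mathbf a)=\sum_{n=1}^N\Phi_n(a_n,a_n)$ is a finite sum of elements of the cone $\KK$, which is closed under addition. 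Applying Proposition \ref{prop: 3.6 cases} to $\Psi$ at the vectors $\mathbf x=(x_1,\dots,x_N)$ and $\widetilde{\mathbf x}=(\widetilde x_1,\dots,\widetilde x_N)$ then produces the finite-sum inequality
\[
\Big\|\sum_{n=1}^N\Phi_n(x_n,\widetilde x_n)\Big\|_\YY\leq\Big\|\sum_{n=1}^N\Phi_n(x_n,x_n)\Big\|_\YY^{1/2}\Big\|\sum_{n=1}^N\Phi_n(\widetilde x_n,\widetilde x_n)\Big\|_\YY^{1/2},
\]
and the identical argument applied to any block $\Phi_{M+1},\dots,\Phi_N$ yields the same estimate for consecutive blocks of indices.

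Before passing to the limit I must check that the mixed series $\sum_n\Phi_n(x_n,\widetilde x_n)$ actually converges in $\YY$, and this is the step where the order-preserving hypothesis on the norm is indispensable. I would verify the Cauchy criterion: for $M<N$ the block inequality bounds $\big\|\sum_{n=M+1}^N\Phi_n(x_n,\widetilde x_n)\big\|_\YY$ by the geometric mean of the two diagonal tail norms. Each diagonal partial tail $\sum_{n=M+1}^N\Phi_n(x_n,x_n)$ lies in $\KK$ and is dominated in the order by the full tail $\sum_{n=M+1}^\infty\Phi_n(x_n,x_n)$, since their difference $\sum_{n=N+1}^\infty\Phi_n(x_n,x_n)$ is a norm-limit of positive partial sums and $\KK$ is closed. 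The order-preserving property then gives
\[
\Big\|\sum_{n=M+1}^N\Phi_n(x_n,x_n)\Big\|_\YY\leq\Big\|\sum_{n=M+1}^\infty\Phi_n(x_n,x_n)\Big\|_\YY,
\]
whose right-hand side tends to $0$ as $M\to\infty$ because the series $\sum_n\Phi_n(x_n,x_n)$ converges; the same holds for the $\widetilde x_n$. Hence the Cauchy differences for the mixed series tend to $0$ uniformly in $N>M$, and completeness of the Banach space $\YY$ gives convergence.

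Finally I would let $N\to\infty$ in the finite-sum inequality: all three partial-sum sequences converge in $\YY$ (the two diagonal ones by hypothesis, the mixed one by the previous step), so continuity of the norm and of $t\mapsto t^{1/2}$ transfers the inequality to the limit and yields the claim. The main obstacle is exactly the convergence of the mixed series $\sum_n\Phi_n(x_n,\widetilde x_n)$: it is the order-preserving property of the norm that allows me to control the norm of a partial tail by that of the full tail and so verify the Cauchy criterion. Without that hypothesis the left-hand side of the asserted inequality need not even be well defined, so everything hinges on this point.
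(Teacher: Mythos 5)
Your proof is correct and is essentially the paper's own argument written out in full: the paper's one-sentence proof appeals to the first part of \cite[Example 1.3.5]{MT} together with Proposition \ref{prop: 3.6 cases} and the order-preserving norm, which is exactly your scheme of packaging $\Phi_1,\dots,\Phi_N$ into one positive sesquilinear map on $\X^N$ to obtain the finite-sum Cauchy--Schwarz inequality, controlling the block sums via $0\le\sum_{n=M+1}^{N}\Phi_n(x_n,x_n)\le\sum_{n=M+1}^{\infty}\Phi_n(x_n,x_n)$ (closedness of $\KK$ plus order-preservation) to verify the Cauchy criterion for the mixed series, and then passing to the limit. Your explicit verification that the mixed series converges---the one point the paper leaves implicit---is precisely where the order-preserving hypothesis enters in the cited argument, so nothing further is needed.
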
\begin{proof}
      The statement can be proved by arguments similar to the first part of \cite[Example 1.3.5]{MT} due to Proposition \ref{prop: 3.6 cases} and the fact that the norm of $\YY$ is order-preserving.
  \end{proof}

For a detailed overview of some other operator-inequalities for positive maps we refer to \cite{BKMS,KMN}.




\begin{defn}\label{defn: $C^*$-valued quasi inner product}
Let $\X$ be a vector space. A $\YY$-valued   faithful positive  sesquilinear map $\vp$ on   $\X\times\X$ is said to be a {\em $\YY$-valued  inner product}  on $\X$ and we often  write $\ip{x_1}{x_2}_\vp:=\vp(x_1,x_2)$,  $x_1,x_2\in\X$.
\end{defn}

 A $\YY$-valued   inner product  on $\X$ $$\vp:\X\times\X\to \YY$$    induces a   norm $\|\cdot\|_\vp$ on $\X$:

\begin{equation*}
\|x\|_\vp:=\sqrt{\|\ip{x}{x}_\vp\|_\YY}=\sqrt{\|\vp(x,x)\|_\YY},\quad x\in\X,\end{equation*} 
since
 \begin{align*} 	
&   \|x_1+x_2\|_\vp\leq \| x_1\|_\vp+\| x_2\|_\vp, \qquad  \forall x_1,x_2\in\X;
\end{align*}  this can be shown similarly to what done in \cite{BIvT1}.\\
The space $\X$ is then a   normed space w.r.to the   norm $\|\cdot\|_\vp$.

 \begin{defn} Let $\X$ be a complex vector space and $\Phi$ be a $\YY$-valued  inner product  on $\X$.
If $\X$ is complete w.r. to the  norm $\|\cdot\|_\vp$, then $\X$ is called a {\em  Banach space with $\YY$-valued   inner product} or for short a {\em $B_\YY$-space}. 
\end{defn}
If $\vp$ is not faithful, we can consider the subspace of $\X$ $$\mathfrak{N}_\Phi=\{x_1\in\X:\, \vp(x_1,x_2)=0_\YY, \forall x_2\in \X\}.$$
  By Proposition \ref{prop: 3.6 cases}, then
\begin{equation*}
    \mathfrak{N}_\Phi=\{x\in\X:\, \vp(x,x)=0_\YY\}.
\end{equation*}

We denote by $\Lambda_\vp(x)$ the coset of $\X/\mathfrak{N}_\Phi$ containing $x\in \X$; i.e., $\Lambda_\vp(x)=x+\mathfrak{N}_\Phi$ and define 
a  $\YY$-valued  inner product on $\X/\mathfrak{N}_\Phi$  as follows: 
 \begin{equation*}
\ip{\Lambda_\vp(x_1)}{ \Lambda_\vp(x_2)}_\vp:=\vp(x_1,x_2),\quad x_1,x_2\in\X.
\end{equation*}The associated   norm is:\begin{equation*}
\|\Lambda_\vp(x)\|_\vp:=\sqrt{\|\vp(x,x)\|_\YY},\quad x\in\X.\end{equation*}
 The quotient space $\X/\mathfrak{N}_\Phi=\Lambda_\vp(\X)$ is a   normed space (see \cite{BIvT1}).
 
 \medskip

Let $\mathcal{K}$ be a   $B_\YY$-space and $D(T)$ be a dense subspace of $\mathcal{K}$.
A linear map $T:D(T)\to \mathcal{K}$ is said {\em $\vp$-adjointable} if there exists a linear map $T^*$ defined on a subspace $D(T^*)\subset \mathcal{K}$ such that
$$\vp(T\xi,\eta)= \vp(\xi, T^*\eta), \quad \forall \xi \in D(T), \eta\in D(T^*).$$

Let $\D$ be a dense subspace of $\mathcal{K}$ and let us consider the following families of linear operators acting on $\D$:
\begin{align*}		{\LDK}&=\{T \mbox{ $\vp$-adjointable}, D(T)=\D;\; D(T^*)\supset \D\} \\	{\Lc^\dagger(\D)}&=\{T\in \LDK: T\D\subset \D; \; T^*\D\subset \D\}.\end{align*}
The involution in $\LDK$ is defined by		$T^\dag := T^*\upharpoonright \D$, the restriction of $T^*$, the $\vp$-adjoint of $T$, to $\D$.
The set $\Lc^\dagger(\D)$ is a  *-algebra.

\begin{rem} \label{rem_closable} If $T\in \LDK$ then $T$ is closable and its $\Phi$-adjoint $T^*$ is closed (it can be shown similarly as in \cite[Remark 2.8]{BIvT1} due to Proposition \ref{prop: 3.6 cases}). Moreover, the space
$\LDK$ is a {\em partial *-algebra} \cite{ait_book}  with respect to the following operations: the usual sum $T_1 + T_2 $,
the scalar multiplication $\lambda T$, the involution $ T \mapsto T\ad := T^* \up {\D}$ and the \emph{(weak)}
partial multiplication $\mult$ of two operators $T_1,T_2\in \LDK$
defined whenever there  exists $W\in \LDK$ such that
$$\Phi(T_2 \xi,T_1^\dag\eta)= \Phi(W\xi,\eta), \quad \forall \xi,\eta \in \D.$$
Due to the density of $\D$ in $\mathcal{K}$, the element $W$, if it exists, is unique. We put $W=T_1\mult T_2$.
\end{rem}

 \begin{defn} \label{defn_starrepmod} 
Let $(\A,\A_0)$ be a quasi *-algebra with unit $\id$.   Let $\mathcal{D}$ be a dense subspace
of a certain $B_\YY$-space $\mathcal{K}$ with
$\YY$-valued    inner product $\ip{\cdot}{\cdot}_\mathcal{K}$.   A linear map $\pi$ from $\A$ into  ${\mathcal L}\ad(\mathcal{D},\mathcal{K})$ is called  a \emph{*-representation} of  $(\A, \A_0)$,
if the following properties are fulfilled:
\begin{itemize}
\item[(i)]  $\pi(a^*)=\pi(a)^\dagger:=\pi(a)^*\upharpoonright\mathcal{D}, \quad \forall \ a\in \A$;
\item[(ii)] for $a\in \A$ and $c\in \A_0$, $\pi(a)\mult\pi(c)$ is well-defined and \linebreak {$\pi(a)\mult \pi(c)=\pi(ac)$}.
\end{itemize}
We assume that for every *-representation $\pi$ of $(\A,\A_0)$,  $\pi(\id)={\idop_{\mathcal{D}}}$, the
identity operator on  the space $\mathcal{D}$.\\
 The *-representation $\pi$  is said to be\begin{itemize}
\item {\em closable} if there exists $\widetilde{\pi}$ the closure of $\pi$, defined as $\widetilde{\pi}(a)=\overline{\pi(a)}\upharpoonright{\widetilde{\D}}$ where $\widetilde{\D}$ is the completion under the graph topology $t_\pi$ defined by the seminorms $\xi\in\mathcal{D}\to \|\xi\|_\mathcal{K}+\|\pi(a)\xi\|_\mathcal{K}$, $a\in\A$, with $ \|\cdot\|_\mathcal{K}$ the    norm induced by the   inner product on $\mathcal{K}$; 
\item \emph{closed} if  $\mathcal{D}[t_\pi]$ is complete; 
\item  \emph{cyclic} if there  exists $\xi\in\mathcal{D}$ such that $\pi(\A_0)\xi$ is dense in $\mathcal{K}$ in its  norm topology. In this case $\xi$ is called {\em cyclic vector}.
\end{itemize}

\end{defn}

\begin{defn}\label{defn: IA} Let $(\A,\A_0)$ be a quasi *-algebra. We denote by $\IA$ the set of all $\YY$-valued  positive sesquilinear  maps on $\A \times \A$ with the following properties:
	\begin{itemize}
	\item[(i)] $\Lambda_\vp(\A_0)=\A_0/\mathfrak{N}_\Phi$ is dense in  the completion $\widetilde{\A}$ of $\A$  w.r. to the  norm $\|\cdot\|_\vp$;
	\item[(ii)]
	 $\vp(ac,d)=\vp(c, a^*d), \quad \forall \ a \in \A, \ c,d \in \A_0$ (left-invariant).  \end{itemize} \end{defn}

If $\A=\A_0$ and $\omega:\A\to\YY$ is a positive linear map, then $\Phi:\A\times\A\to\YY$ given by $$\Phi(a,b)=\omega(b^*a), \quad\forall a,b\in\A$$ satisfies the conditions of Definition \ref{defn: IA}, i.e. $\Phi\in\IA$.

\begin{prop}\label{prop_rep}
\vspace{-1mm} Let $(\A,\A_0)$  be a  quasi *-algebra with unit $\id$ and $\vp$ be a $\YY$-valued left-invariant positive  sesquilinear  map on $\A \times \A$.
The following statements are equivalent:
\begin{itemize}
\item[{\em (i)}]$\vp\in\IA$;
\item[{\em (ii)}] there exist a $B_\YY$-space $\mathcal{K}_\Phi$ with $\YY$-valued  inner product  $\ip{\cdot}{\cdot}_{\mathcal{K}_\Phi}$,
a dense subspace $\D_\Phi\subseteq\mathcal{K}_\Phi$ and a closed cyclic *-representation $\pi:\A\to{\mathcal L}^\dag(\D_\Phi,\mathcal{K}_\Phi)$ with cyclic vector $\xi_\vp$ such that $$\ip{\pi(a)\xi}{\eta}_{\mathcal{K}_\Phi}=\ip{\xi}{\pi(a^*)\eta}_{\mathcal{K}_\Phi}, \quad \forall \xi,\eta\in\D_\Phi, a\in\A$$ and such that  $$\vp(a,b)=\ip{\pi(a) \xi_\vp}{\pi(b) \xi_\vp}_{\mathcal{K}_\Phi}, \quad \forall  a,b\in\A.$$ 
\end{itemize}
\end{prop}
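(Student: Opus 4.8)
The plan is to prove the equivalence in the usual two steps, the implication (i)$\Rightarrow$(ii) being a GNS-type construction adapted to the $\YY$-valued setting (parallel to the $C^*$-valued construction of \cite{BIvT1}), and (ii)$\Rightarrow$(i) a routine verification; throughout, the role played by positivity together with the ordinary Cauchy-Schwarz inequality in the classical GNS construction is here taken over by the generalized inequality of Proposition \ref{prop: 3.6 cases}. For (i)$\Rightarrow$(ii) I would begin from the quotient $\cls{\A}=\A/\mathfrak{N}_\Phi$ endowed with the $\YY$-valued inner product $\ip{\cls{a}}{\cls{b}}_\vp:=\vp(a,b)$, which is well defined and faithful on the quotient precisely because Proposition \ref{prop: 3.6 cases} gives $\mathfrak{N}_\Phi=\{x:\vp(x,x)=0_\YY\}$. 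Completing with respect to $\|\cdot\|_\vp$ yields the $B_\YY$-space $\mathcal{K}_\Phi:=\widetilde{\A}$; the joint continuity of $\ip{\cdot}{\cdot}_\vp$ furnished by the same inequality is what lets the inner product extend to $\mathcal{K}_\Phi$. I would then set $\D_\Phi:=\cls{\Ao}$, dense in $\mathcal{K}_\Phi$ by hypothesis (i) of $\IA$, and take $\xi_\vp:=\cls{\id}$.

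Next I would define $\pi(a)\cls{c}:=\cls{ac}$ for $a\in\A$, $c\in\Ao$. Well-definedness is the first point requiring care: if $c\in\Ao\cap\mathfrak{N}_\Phi$, left-invariance gives $\vp(ac,d)=\vp(c,a^*d)=0_\YY$ for all $d\in\Ao$, whence, by density of $\Ao$ and continuity of $\vp$, $ac\in\mathfrak{N}_\Phi$. The identity
\[
\ip{\pi(a)\cls{c}}{\cls{d}}_\vp=\vp(ac,d)=\vp(c,a^*d)=\ip{\cls{c}}{\pi(a^*)\cls{d}}_\vp
\]
shows at once that $\pi(a)\in{\mathcal L}\ad(\D_\Phi,\mathcal{K}_\Phi)$ with $\pi(a)^\dag=\pi(a^*)$ and that the asserted relation $\ip{\pi(a)\xi}{\eta}_{\mathcal{K}_\Phi}=\ip{\xi}{\pi(a^*)\eta}_{\mathcal{K}_\Phi}$ holds on $\D_\Phi$. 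For the module property I would compute, for $e,f\in\Ao$,
\[
\ip{\pi(c)\cls{e}}{\pi(a)^\dag\cls{f}}_\vp=\vp(ce,a^*f)=\vp(ace,f)=\ip{\pi(ac)\cls{e}}{\cls{f}}_\vp,
\]
using associativity $a(ce)=(ac)e$ and left-invariance; by uniqueness of the weak product (Remark \ref{rem_closable}) this gives $\pi(a)\mult\pi(c)=\pi(ac)$. Cyclicity is immediate, since $\pi(c)\xi_\vp=\cls{c}$ exhausts the dense set $\D_\Phi$, while $\pi(a)\xi_\vp=\cls{a}$ yields $\ip{\pi(a)\xi_\vp}{\pi(b)\xi_\vp}_{\mathcal{K}_\Phi}=\vp(a,b)$. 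To obtain a \emph{closed} representation I would finally pass to the closure $\widetilde{\pi}$, which exists by Remark \ref{rem_closable} and inherits cyclicity and the reproducing property.

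For the converse (ii)$\Rightarrow$(i), positivity and sesquilinearity of $\vp(a,b)=\ip{\pi(a)\xi_\vp}{\pi(b)\xi_\vp}_{\mathcal{K}_\Phi}$ are clear from the axioms of the $\YY$-valued inner product; left-invariance follows from
\[
\vp(ac,d)=\ip{(\pi(a)\mult\pi(c))\xi_\vp}{\pi(d)\xi_\vp}_{\mathcal{K}_\Phi}=\ip{\pi(c)\xi_\vp}{\pi(a^*)\pi(d)\xi_\vp}_{\mathcal{K}_\Phi}=\vp(c,a^*d),
\]
using the adjointability relation and the identities $\pi(ac)=\pi(a)\mult\pi(c)$, $\pi(a^*d)=\pi(a^*)\mult\pi(d)$; and condition (i) of $\IA$ holds because cyclicity makes $\cls{\Ao}\cong\pi(\Ao)\xi_\vp$ dense in $\mathcal{K}_\Phi\cong\widetilde{\A}$.

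I expect the principal obstacle to lie in the passage to the completion in the Banach- rather than Hilbert-space setting: lacking orthogonal complements or bounded projections, every continuity and extension step must be channelled through the generalized Cauchy-Schwarz inequality of Proposition \ref{prop: 3.6 cases}, which is exactly what makes $\mathcal{K}_\Phi$ a genuine $B_\YY$-space and $\pi(a)$ a well-defined $\vp$-adjointable operator. The remaining delicate points are checking that the weak partial multiplication $\mult$ is respected, so that $\pi$ is a $*$-representation in the sense of Definition \ref{defn_starrepmod}, and that closability is preserved under the construction.
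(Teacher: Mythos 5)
Your proposal is correct and takes essentially the same approach as the paper: the paper's own proof is a one-sentence delegation to the GNS construction of \cite[Theorem 3.2]{BIvT1}, made to work in the $\YY$-valued setting precisely by the generalized Cauchy--Schwarz inequality of Proposition \ref{prop: 3.6 cases} (for $\mathfrak{N}_\Phi$, well-definedness, and extension to the completion) and by Remark \ref{rem_closable} (for passing to the closed representation), which are exactly the ingredients you deploy. One remark: your re-verification of left-invariance in the direction (ii)$\Rightarrow$(i) is superfluous, since left-invariance of $\vp$ is a standing hypothesis of the proposition --- which is just as well, because that computation tacitly uses $\pi(d)\xi_\vp\in\D_\Phi$, which is not guaranteed for $\pi$ taking values in ${\mathcal L}^\dag(\D_\Phi,\mathcal{K}_\Phi)$.
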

\begin{proof}The proof proceeds along the lines of that one of \cite[Theorem 3.2]{BIvT1}, due to the Cauchy-Schwarz inequality for positive $\YY$-valued sesquilinear maps (Proposition \ref{prop: 3.6 cases}) and due to Remark \ref{rem_closable}. \end{proof}

\begin{rem}\label{re: bounded pos sesq}
     By the same arguments as in \cite[Corollary 3.5]{BIvT1} 
  one can show that every $\YY$-valued, {\em bounded},  left-invariant, positive sesquilinear map on a unital normed quasi *-algebra belongs to $\IA$.\end{rem}

 In the following example we construct some $\YY$-valued, bounded, left-invariant, positive sesquilinear maps.
    \begin{ex}\label{ex: 2.3} Let $\rho$ be a finite trace on a von     Neumann algebra $\MM$. Let $W\in L^\infty(\rho)$, with $W\geq 0$. Let $k\in C([0,\|W\|]\times[0,\|W\|])$ be such that $k\geq0$. Then, for each $x\in[0,\|W\|]$, the function $\eta_x:[0,\|W\|]\to\mathbb{C}$ defined by $\eta_x(t)=k(x,t)$ is a continuous, positive function on $[0,\|W\|]$. Therefore, by the functional calculus, $\eta_x(W)$ defines a positive operator in $L^\infty(\rho)$. Let us define, for every $x\in[0,\|W\|]$ and $X,Y\in L^2(\rho)$,
        \begin{equation*}
        \varphi(X,Y)(x)=\rho(X\eta_x(W)Y^*).\end{equation*} 
        Then,  $\varphi(X,Y)\in C([0,\|W\|])$ for all $X,Y\in L^2(\rho)$. Moreover, $\varphi:L^2(\rho)\times L^2(\rho)\to C([0,\|W\|]) $ is a bounded, left-invariant, positive sesquilinear map (see \cite{BIvT1}).\\
 Let $f:[0,1]\to [0,\|W\|]$ be  measurable. For every $X,Y \in L^2(\rho)$, let $\psi:L^2(\rho)\times L^2(\rho)\to L^2([0,1])$ defined by $\psi(X,Y)=\phi(X,Y)\circ f$. 
Then one can check that $\psi$ is a bounded, left-invariant, positive sesquilinear map.\\ 
%
    Finally, if $(\A,\A_0)$ is a unital quasi *-algebra and $\Psi:\A\times\A\to L^1(\Omega)$ is a bounded left-invariant  positive sesquilinear map, then the induced map $\widetilde{\Psi}:\A\times\A\to M(\Omega)$ given by $d\widetilde{\Psi}(a,b)=\Psi(a,b)d\mu$, for all $a,b\in\A$ and some fixed positive measure $\mu\in M(\Omega)$, is also a bounded left-invariant  positive sesquilinear map.
\end{ex}

\begin{cor}\label{moreGNS alg}  Let $\A$ be a *--algebra with unit
$\id$ and let $\omega$ be a positive  linear $\YY$-valued map on $\A$. Then, there
exists a  $B_\YY$-space $\mathcal{K}_\Phi$ whose  norm is induced by a $\YY$-valued   inner product $\ip{\cdot}{\cdot}_{\mathcal{K}_\Phi}$, a dense subspace $\D_\omega\subseteq\mathcal{K}_\Phi$ and a closed cyclic *--representation $\ppi_\omega$ of $\A$ with domain $\D_\omega$, such that $$\omega(b^*ac)=\ip{\ppi_\omega(a) \Lambda_\omega(c)}{\Lambda_\omega(b)}_{\mathcal{K}_\Phi},\quad\forall a,b,c\in\A.$$ Moreover, there exists  a  cyclic vector $\eta_\omega$,
such that $$ \omega(a)=\ip{{\ppi}_\omega(a)\eta_\omega}{\eta_\omega}_{\mathcal{K}_\Phi}, \quad \forall \ a \in \A.$$ 
The representation is unique up to unitary equivalence. 
\end{cor}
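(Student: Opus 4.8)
The plan is to derive this corollary from Proposition \ref{prop_rep} by attaching to the positive linear map $\omega$ the sesquilinear map it naturally induces. First I would define $\Phi:\A\times\A\to\YY$ by $\Phi(a,b)=\omega(b^*a)$ for all $a,b\in\A$, and view $\A$ as a quasi *-algebra with $\A=\A_0$. Then $\Phi$ is a $\YY$-valued positive sesquilinear map: positivity of $\omega$ gives $\Phi(a,a)=\omega(a^*a)\in\KK$, while sesquilinearity is immediate from the algebra structure. Moreover $\Phi$ is left-invariant, since $\Phi(ac,d)=\omega(d^*ac)=\omega((a^*d)^*c)=\Phi(c,a^*d)$. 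As recorded in the observation following Definition \ref{defn: IA}, this $\Phi$ belongs to $\IA$: condition (i) holds because $\Lambda_\omega(\A_0)=\Lambda_\omega(\A)$ is, by construction, dense in the completion of $\A$ with respect to $\|\cdot\|_\Phi$, and condition (ii) is exactly the left-invariance just verified.

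Next I would apply Proposition \ref{prop_rep} to $\Phi$. Since $(\A,\A)$ is unital and $\Phi\in\IA$, it produces a $B_\YY$-space $\mathcal{K}_\Phi$ with $\YY$-valued inner product $\ip{\cdot}{\cdot}_{\mathcal{K}_\Phi}$, a dense subspace $\D_\Phi\subseteq\mathcal{K}_\Phi$, and a closed cyclic *-representation $\pi:\A\to{\mathcal L}\ad(\D_\Phi,\mathcal{K}_\Phi)$ with cyclic vector $\xi_\Phi$ such that $\Phi(a,b)=\ip{\pi(a)\xi_\Phi}{\pi(b)\xi_\Phi}_{\mathcal{K}_\Phi}$. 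Using the explicit form of this construction, I would take $\D_\omega:=\D_\Phi=\Lambda_\omega(\A)$, $\ppi_\omega:=\pi$ acting by $\ppi_\omega(a)\Lambda_\omega(c)=\Lambda_\omega(ac)$, and $\xi_\Phi=\Lambda_\omega(\id)$. The first displayed identity then follows from
\[
\ip{\ppi_\omega(a)\Lambda_\omega(c)}{\Lambda_\omega(b)}_{\mathcal{K}_\Phi}=\ip{\Lambda_\omega(ac)}{\Lambda_\omega(b)}_{\mathcal{K}_\Phi}=\Phi(ac,b)=\omega(b^*ac),
\]
and, setting $\eta_\omega:=\Lambda_\omega(\id)=\xi_\Phi$ and recalling $\ppi_\omega(\id)=\idop_{\D_\omega}$, one obtains $\ip{\ppi_\omega(a)\eta_\omega}{\eta_\omega}_{\mathcal{K}_\Phi}=\Phi(a\id,\id)=\omega(a)$. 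Here $\eta_\omega$ is cyclic because it is the cyclic vector $\xi_\Phi$ furnished by Proposition \ref{prop_rep}.

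For the uniqueness claim I would run the usual GNS rigidity argument, adapted to the $B_\YY$-space setting. Given a second triple $(\pi',\D',\mathcal{K}')$ with cyclic vector $\eta'$ satisfying the same identities, I would define $U$ on the dense subspace $\ppi_\omega(\A)\eta_\omega$ by $U\,\ppi_\omega(a)\eta_\omega:=\pi'(a)\eta'$. The crucial point is that $U$ preserves the $\YY$-valued inner product, because for all $a,b\in\A$
\[
\ip{\pi'(a)\eta'}{\pi'(b)\eta'}_{\mathcal{K}'}=\omega(b^*a)=\ip{\ppi_\omega(a)\eta_\omega}{\ppi_\omega(b)\eta_\omega}_{\mathcal{K}_\Phi};
\]
this simultaneously shows that $U$ is well defined on cosets (so that $\mathfrak{N}_\Phi$ causes no ambiguity) and that it is isometric for the induced norms. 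By cyclicity $U$ has dense range, and by completeness of both $B_\YY$-spaces it extends to an isometric bijection $\mathcal{K}_\Phi\to\mathcal{K}'$; a computation on the dense subspace then gives $U\ppi_\omega(a)=\pi'(a)U$ and $U\D_\omega=\D'$.

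I expect this last step to be the main obstacle. Since $\mathcal{K}_\Phi$ carries only the norm induced by the $\YY$-valued inner product and is not a Hilbert space, one must check that $U$ is a genuine unitary equivalence, that is, an isometric isomorphism preserving the full $\YY$-valued inner product and respecting the $\Phi$-adjointable operator structure, rather than merely a norm isometry. The displayed equality of $\YY$-valued inner products, together with the Cauchy-Schwarz inequality of Proposition \ref{prop: 3.6 cases} (which guarantees $\mathfrak{N}_\Phi=\{x:\Phi(x,x)=0_\YY\}$ and hence that the quotient norm is well behaved), is exactly what makes this extension possible.
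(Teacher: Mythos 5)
Your proposal is correct and takes essentially the same route as the paper: the paper proves the corollary by noting (in the observation following Definition \ref{defn: IA}) that $\Phi(a,b)=\omega(b^*a)$ belongs to $\IA$ when $\A=\A_0$ and then invoking Proposition \ref{prop_rep}, in analogy with Corollary 3.10 of \cite{BIvT1}, which is precisely the reduction and GNS-type uniqueness argument you carry out explicitly. The continuity concern you raise for extending the unitary $U$ to the completions is indeed resolved by the Cauchy--Schwarz inequality of Proposition \ref{prop: 3.6 cases}, exactly as you indicate.
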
\begin{proof}
    The statement can be proved analogously to \cite[Corollary 3.10]{BIvT1}, due to Proposition \ref{prop_rep}.
\end{proof}

 \section{Representations induced by completely positive sesquilinear maps}
\label{Sect_4}
In this section, we extend the results of Section \ref{sect_3} to the case of $\YY$-valued completely positive  sesquilinear maps.  
\\

 
Let  $\mathfrak{X}$ and $\YY$ be   vector spaces. We will denote by $\mathcal{S}_{\YY}(\mathfrak{X})$  the space of all $\YY$-valued sesquilinear maps on $\mathfrak{X}$.

\begin{defn}\label{defn: about Phi in SYX}
Let  $\mathfrak{X}$ be a normed vector space, $\YY$ an ordered Banach module over the  *-algebra $\YY_0$, with positive cone $\KK$ and 
 $(\A,\A_0)$ a normed quasi *-algebra with unit.    The sesquilinear map $\Phi:\A\times \A\to \mathcal{S}_{\YY}(\mathfrak{X})$  is called  \begin{itemize}
       \item {\em bounded} if there exists a constant $M>0$ such that $$\|\Phi(a,b)(x_1,x_2)\|_{\YY}\leq M \|a\|\|b\|\|x_1\|\|x_2\|, \quad \forall a,b\in\A, x_1,x_2\in\mathfrak{X};$$
       \item {\em left-invariant} if $\Phi(ac,d)=\Phi(c,a^*d), \quad\forall a\in\A, c,d\in\A_0$;
       \item  {\em completely positive} if for every $N \in {\mb N}$, $a_1,\dots, a_N \in \A$, \\ \mbox{$x_1,\dots, x_N\in \X$},
$$\sum_{i,j=1}^N \Phi(a_i, a_j)(x_i,x_j) \in \KK.$$
   \end{itemize} 
\end{defn}

The   Stinespring Theorem is the main result on completely  positive linear maps.  In the following we consider  certain  {\em completely positive sesquilinear maps} taking values on the space of all $\YY$-valued sesquilinear maps on a vector space, see Theorem \ref{thm: Steinspring}  below.\\

 Let  $(\A,\A_0)$ be a normed quasi *-algebra with unit $\id$,  $\mathfrak{X}$ be a normed complex vector space. Let the cone $\KK$ in $\YY$ be closed and  
 $\Phi:\A\times \A\to \mathcal{S}_{\YY}(\X)$ be a left-invariant positive sesquilinear map. Consider  the algebraic tensor product $\A\otimes \X$ and its subset $$\mathcal{N}_\Phi=\left\{\sum_{i=1}^n a_i\otimes x_i\in\A\otimes \mathfrak{X}|\, \sum_{i=1}^n\sum_{j=1}^n\Phi(a_i, a_j)(x_i,x_j)=0_\YY\right\}.$$
 

Hence, it is easy to check that the quotient space $(\A\otimes \X)/\mathcal{N}_\Phi$ is a   normed space.
\begin{thm}\label{thm: Steinspring} 
Let  $\mathfrak{X}$ be a normed complex vector space. 
Let $(\A,\A_0)$ be a normed quasi *-algebra with unit $\id$  and  $\Phi:\A\times \A\to \mathcal{S}_{\YY}(\X)$  a left-invariant  sesquilinear map. Let $\Phi$ be bounded with bound $M>0$. Then, $\Phi$ is completely positive if and only if there exist a  $B_\YY$-space $\mathcal{K}_\Phi$,  a dense subspace $\D_\Phi$ of $\mathcal{K}_\Phi$, a closed *-representation $\pi$ of $\A$ in ${\mathcal L}\ad(\D_\Phi,\mathcal{K}_\Phi)$ and a bounded linear operator $V:\mathfrak{X}\to\D_\Phi$  such that $\pi(\A)V\X=(\A\otimes \X)/\mathcal{N}_\Phi$ and, for all $a,b\in \A$ and $x_1,x_2\in \mathfrak{X}$, it holds that $$\Phi(a,b)(x_1,x_2)=\ip{\pi(a)Vx_1}{\pi(b)Vx_2}_{\mathcal{K}_\Phi}.$$ In this case $\|V\|^2\leq M\|\id\|^2$. Moreover, the  triple $(\pi,V,\D_\Phi)$  is     such that $\overline{\pi(\A)V\X}=\mathcal{K}_\Phi$. 
\end{thm}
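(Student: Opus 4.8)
The plan is to run a Stinespring/GNS-type construction on the algebraic tensor product $\A\otimes\X$, taking $\mathcal{N}_\Phi$ as the null space, and to exploit the fact that the operators produced need only be $\Phi$-adjointable (elements of a partial $*$-algebra) rather than bounded. The implication from the representation to complete positivity is immediate: substituting the claimed formula, for $a_1,\dots,a_N\in\A$ and $x_1,\dots,x_N\in\X$ and with $\eta:=\sum_i\pi(a_i)Vx_i$, one gets $\sum_{i,j}\Phi(a_i,a_j)(x_i,x_j)=\langle\sum_i\pi(a_i)Vx_i\mid\sum_j\pi(a_j)Vx_j\rangle_{\mathcal{K}_\Phi}=\langle\eta\mid\eta\rangle_{\mathcal{K}_\Phi}\in\KK$ by positivity of the $\YY$-valued inner product. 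For the converse I would first define a $\YY$-valued sesquilinear form on $\A\otimes\X$ by $\langle a\otimes x\mid b\otimes y\rangle:=\Phi(a,b)(x,y)$, extended (sesqui)linearly; the sesquilinearity of $\Phi$ in both pairs of variables makes this well defined on the tensor product, and complete positivity is precisely the statement that the form is positive, with null space $\mathcal{N}_\Phi$. Proposition \ref{prop: 3.6 cases} then shows $\mathcal{N}_\Phi$ is a subspace equal to $\{u:\langle u\mid v\rangle=0_\YY\ \forall v\}$, so the quotient carries a faithful $\YY$-valued inner product; I let $\mathcal{K}_\Phi$ be its completion, a $B_\YY$-space, and write $\Lambda_\Phi(u)$ for the coset of $u\in\A\otimes\X$.

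Next I would set $\D_\Phi:=\Lambda_\Phi(\A_0\otimes\X)$, $Vx:=\Lambda_\Phi(\id\otimes x)$, and $\pi(a)\Lambda_\Phi(c\otimes x):=\Lambda_\Phi(ac\otimes x)$ for $a\in\A$, $c\in\A_0$ (where $ac\in\A$ via the bimodule structure). The crucial computation, and the only place where left-invariance is used, is adjointability: for $c,d\in\A_0$, $\langle\pi(a)\Lambda_\Phi(c\otimes x)\mid\Lambda_\Phi(d\otimes y)\rangle=\Phi(ac,d)(x,y)=\Phi(c,a^*d)(x,y)=\langle\Lambda_\Phi(c\otimes x)\mid\pi(a^*)\Lambda_\Phi(d\otimes y)\rangle$, so $\pi(a)$ is $\Phi$-adjointable with $\Phi$-adjoint $\pi(a^*)$ defined on $\D_\Phi$, i.e. $\pi(a)\in\mathcal{L}\ad(\D_\Phi,\mathcal{K}_\Phi)$. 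Well-definedness of $\pi(a)$ follows from the same identity together with Cauchy-Schwarz: if $u\in\mathcal{N}_\Phi$ then $\langle\Lambda_\Phi(au)\mid\Lambda_\Phi(v)\rangle=\langle\Lambda_\Phi(u)\mid\Lambda_\Phi(a^*v)\rangle=0_\YY$ for all $v\in\A_0\otimes\X$, and continuity of the inner product on the dense domain forces $\langle\Lambda_\Phi(au)\mid\Lambda_\Phi(au)\rangle=0_\YY$, whence $au\in\mathcal{N}_\Phi$. The properties $\pi(a^*)=\pi(a)^\dagger$, $\pi(\id)=\idop_{\D_\Phi}$ and $\pi(a)\mult\pi(c)=\pi(ac)$ (for $a\in\A$, $c\in\A_0$) are then read off from the associativity laws and left-invariance, making $\pi$ a $*$-representation in the sense of Definition \ref{defn_starrepmod}.

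It remains to record the quantitative and topological facts. The operator $V$ is linear with $\|Vx\|^2=\|\Phi(\id,\id)(x,x)\|_\YY\leq M\|\id\|^2\|x\|^2$, giving $\|V\|^2\leq M\|\id\|^2$; moreover $\pi(a)Vx=\Lambda_\Phi(a\otimes x)$, so $\Phi(a,b)(x,y)=\langle\pi(a)Vx\mid\pi(b)Vy\rangle_{\mathcal{K}_\Phi}$ and $\pi(\A)V\X=\Lambda_\Phi(\A\otimes\X)=(\A\otimes\X)/\mathcal{N}_\Phi$. Boundedness of $\Phi$ enters a second time to prove density: for $a\in\A$ choose $a_n\in\A_0$ with $a_n\to a$, so that $\|\Lambda_\Phi((a-a_n)\otimes x)\|^2\leq M\|a-a_n\|^2\|x\|^2\to0$; hence $\D_\Phi$, and a fortiori $\pi(\A)V\X$, is dense, i.e. $\overline{\pi(\A)V\X}=\mathcal{K}_\Phi$. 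Finally, by Remark \ref{rem_closable} the representation is closable, and replacing $\D_\Phi$ by its completion in the graph topology yields a closed $*$-representation; all the identities above persist, since $V\X$ and the relevant combinations live in the original domain.

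The hard part, and the point I would be most careful about, is the well-definedness and adjointability of $\pi(a)$ for a general $a\in\A$: in a quasi $*$-algebra the product $\A\cdot\A$ is undefined and $\pi(a)$ is typically unbounded, so the classical $C^*$-Stinespring argument cannot be imitated. The resolution is structural — building the domain from $\A_0\otimes\X$ means left-invariance only ever moves elements of $\A_0$ across the inner product and thus delivers the $\Phi$-adjoint, while asking only for adjointability (membership in the partial $*$-algebra $\mathcal{L}\ad(\D_\Phi,\mathcal{K}_\Phi)$) rather than boundedness sidesteps the absence of operator inequalities on the non-$C^*$ algebra $\A_0$. Boundedness of $\Phi$ is then needed purely for the estimate on $V$ and for the density of $\D_\Phi$.
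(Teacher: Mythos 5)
Your proof is correct and takes essentially the same route as the paper: the paper's own proof just defers to the GNS/Stinespring-type construction of \cite{BIT}, made to work here by the Cauchy--Schwarz inequality of Proposition \ref{prop: 3.6 cases} and the closability statement of Remark \ref{rem_closable}, and your write-up carries out precisely that construction (the $\YY$-valued form on $\A\otimes\X$, quotient by $\mathcal{N}_\Phi$, completion to a $B_\YY$-space, $\pi(a)\Lambda_\Phi(c\otimes x)=\Lambda_\Phi(ac\otimes x)$, $Vx=\Lambda_\Phi(\id\otimes x)$, and closure in the graph topology). One small correction to your closing commentary: boundedness of $\Phi$ is used not only for the bound on $V$ and for density, but also -- through that very density of $\Lambda_\Phi(\A_0\otimes\X)$ in $\Lambda_\Phi(\A\otimes\X)$, which your well-definedness argument invokes -- to show $\pi(a)$ is well defined, since the direct computation $\langle au\mid au\rangle=0$ is unavailable when $a^*(au)$ involves an undefined product in $\A$.
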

 \begin{proof} The proof  goes along the lines of that of \cite{BIT} where more traditional sesquilinear forms were considered thanks to Proposition \ref{prop: 3.6 cases} and Remark \ref{rem_closable}. We omit the details. 
\end{proof}

 By Theorem \ref{thm: Steinspring},  and similar arguments to those in the proof of Proposition \ref{prop: 3.16 sequences}, we deduce the following
 
 \begin{cor}\label{cor: infinite sums}
 Let  $\mathfrak{X}$ be a normed complex vector space. 
Let $(\A,\A_0)$ be a normed quasi *-algebra with unit $\id$  and        $\{\Phi_n\}_n$ be a sequence of bounded, left-invariant, completely positive sesquilinear maps $\Phi_n:\A\times\A\to \mathcal{S}_\YY(\X)$. Let $\{a_n\}_n$, $\{\widetilde{a}_n\}_n$ be sequences in $\A$ and $\{x_n\}_n$, $\{\widetilde{x}_n\}_n$ be sequences in $\X$ such that the series $\sum_{n=1}^\infty \Phi_n(a_n,a_n)(x_n,x_n)$ and $\sum_{n=1}^\infty \Phi_n(\widetilde{a}_n,\widetilde{a}_n)(\widetilde{x}_n,\widetilde{x}_n)$ are convergent in $\YY$. Then the series
       $\sum_{n=1}^\infty \Phi_n(a_n,\widetilde{a}_n)(x_n,\widetilde{x}_n)$ is convergent and
     \begin{eqnarray}\label{eq: Phi_n} 
	&& \hspace{3mm} \left\|\sum_{n=1}^\infty \Phi_n(a_n,\widetilde{a}_n)(x_n,\widetilde{x}_n)\right\|_\YY \\ &&\nonumber \quad\quad\leq \left\|\sum_{n=1}^\infty \Phi_n(a_n,a_n)(x_n,x_n)\right\|_\YY^{1/2}\left\|\sum_{n=1}^\infty \Phi_n(\widetilde{a}_n,\widetilde{a}_n)(\widetilde{x}_n,\widetilde{x}_n)\right\|_\YY^{1/2}.\end{eqnarray}
 \end{cor}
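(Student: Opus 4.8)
The plan is to use the Stinespring representation of Theorem~\ref{thm: Steinspring} to recast each $\Phi_n$ in inner-product form, and then to reduce the statement to the sum Cauchy--Schwarz inequality of Proposition~\ref{prop: 3.16 sequences}. Since every $\Phi_n$ is bounded, left-invariant and completely positive, Theorem~\ref{thm: Steinspring} yields, for each $n$, a $B_\YY$-space $\mathcal{K}_n$ with $\YY$-valued inner product $\ip{\cdot}{\cdot}_{\mathcal{K}_n}$, a closed $*$-representation $\pi_n$ of $\A$ and a bounded operator $V_n:\X\to\mathcal{K}_n$ such that $\Phi_n(a,b)(x_1,x_2)=\ip{\pi_n(a)V_nx_1}{\pi_n(b)V_nx_2}_{\mathcal{K}_n}$ for all $a,b\in\A$ and $x_1,x_2\in\X$. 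Setting $\xi_n:=\pi_n(a_n)V_nx_n$ and $\eta_n:=\pi_n(\widetilde{a}_n)V_n\widetilde{x}_n$ in $\mathcal{K}_n$, the three series in the statement become $\sum_n\ip{\xi_n}{\eta_n}_{\mathcal{K}_n}$, $\sum_n\ip{\xi_n}{\xi_n}_{\mathcal{K}_n}$ and $\sum_n\ip{\eta_n}{\eta_n}_{\mathcal{K}_n}$, the last two being convergent by hypothesis.

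Next I would bring all the spaces $\mathcal{K}_n$ onto a common footing. Let $\mathcal{W}=\bigoplus_n\mathcal{K}_n$ be the algebraic direct sum and, for each $n$, define $\Psi_n:\mathcal{W}\times\mathcal{W}\to\YY$ by $\Psi_n((u_m)_m,(v_m)_m)=\ip{u_n}{v_n}_{\mathcal{K}_n}$. Each $\Psi_n$ is a $\YY$-valued positive sesquilinear map, its positivity being inherited from that of $\ip{\cdot}{\cdot}_{\mathcal{K}_n}$. Writing $\widehat{\xi}_n,\widehat{\eta}_n\in\mathcal{W}$ for the elements supported in the $n$-th slot with entries $\xi_n$ and $\eta_n$ respectively, one has $\Psi_n(\widehat{\xi}_n,\widehat{\eta}_n)=\ip{\xi_n}{\eta_n}_{\mathcal{K}_n}$, and likewise for the diagonal terms. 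The convergence hypotheses thus transfer verbatim to the sequences $\{\widehat{\xi}_n\}_n$, $\{\widehat{\eta}_n\}_n$ and the maps $\{\Psi_n\}_n$, so Proposition~\ref{prop: 3.16 sequences} applies and delivers both the convergence of $\sum_n\Psi_n(\widehat{\xi}_n,\widehat{\eta}_n)$ and the inequality~\eqref{eq: Phi_n}, after translating back through the identities above.

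The only point demanding care is the passage from finite to infinite sums, which is exactly what is packaged inside Proposition~\ref{prop: 3.16 sequences}. For a finite index set the inequality is immediate from the single Cauchy--Schwarz inequality of Proposition~\ref{prop: 3.6 cases} applied to the positive sesquilinear map $(u,v)\mapsto\sum_{n\le N}\Psi_n(u,v)$ on $\mathcal{W}$; the genuine work is then (i) showing that the cross series $\sum_n\Psi_n(\widehat{\xi}_n,\widehat{\eta}_n)$ converges, which I would obtain from the Cauchy criterion in the Banach space $\YY$ together with the finite Cauchy--Schwarz inequality applied to the tail blocks $\sum_{n=M+1}^{N}$, and (ii) taking the limit $N\to\infty$ in the finite inequality, where the order-preserving character of the norm on $\YY$ is what allows one to control the partial sums of the positive series by their total sums. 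I expect (i)--(ii) to be the main obstacle; once the problem has been recast in inner-product form via Theorem~\ref{thm: Steinspring}, however, these are precisely the steps already carried out in Proposition~\ref{prop: 3.16 sequences}, so no genuinely new difficulty arises and the proof reduces to the bookkeeping of the two paragraphs above.
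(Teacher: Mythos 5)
Your proof is correct and takes essentially the same route as the paper, which likewise obtains the corollary by combining Theorem~\ref{thm: Steinspring} with the argument of Proposition~\ref{prop: 3.16 sequences}; your direct-sum construction of $\mathcal{W}$ and the maps $\Psi_n$ is just bookkeeping that lets you invoke that proposition verbatim rather than repeat its argument, and your tail-block Cauchy--Schwarz step for convergence is exactly what that argument contains. The only point worth flagging is that, as in Proposition~\ref{prop: 3.16 sequences}, the order-preserving property of $\|\cdot\|_\YY$ is tacitly used to dominate the finite partial sums of the positive series by the full sums, which is consistent with the paper's standing assumptions on $\YY$.
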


In the following we will refer to a  triple $(\pi, V, \D_\Phi)$ as in  Theorem \ref{thm: Steinspring} as a {\em Stinespring  triple decomposing $\Phi$}.

\begin{defn}
    Let $\X_1,\X_2$ be two   $B_\YY$-spaces with respect to the   norms  $\|\cdot\|_\Phi$ and $\|\cdot\|_\Psi$, respectively, induced by two $\YY$-valued   inner products $\Phi:\X_1\times\X_1\to \YY$ and $\Psi:\X_2\times\X_2\to \YY$. A surjective operator $U:\X_1\to\X_2$ is said {\em unitary} if $$\ip{U\xi}{U\eta}_\Psi=\ip{\xi}{\eta}_\Phi, \quad \forall \xi,\eta\in\X_1.$$
\end{defn}
    In analogy to Corollary 3.10 in \cite{BIT}, we can prove the following

\begin{prop}\label{prop: unitary equivalence}
Let  $\mathfrak{X}$ be a normed complex vector space. 
Let $(\A,\A_0)$ be a normed quasi *-algebra with unit $\id$ and  $\Phi:\A\times \A\to \mathcal{S}_{\YY}(\X)$ be a bounded left-invariant   sesquilinear map.  If $\Phi$ is completely positive then  $\pi$ and $V$ in the  triple $(\pi, V, \D_\Phi)$ are uniquely determined by $\Phi$ up to unitary equivalence, i.e., if  $(\pi, V, \D_\Phi)$ and $(\pi_1, V_1, \mathcal{E}_\Phi)$  are two Stinespring  triples decomposing $\Phi$, there exists a unitary operator $U$ such that  $UV = V_1$, 
$U\D_\Phi = \mathcal{E}_\Phi$ and $\pi(a) = U^{-1}\pi_1(a)U$, for all $a\in\A$. 
\end{prop}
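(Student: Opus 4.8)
The plan is to build the intertwining unitary on the \emph{triple-independent} dense model common to both decompositions, namely the quotient $(\A\otimes\X)/\mathcal{N}_\Phi$, which by definition depends only on $\Phi$ and not on the chosen Stinespring triple. Write $\mathcal{K}_1$ for the $B_\YY$-space carrying the second triple $(\pi_1,V_1,\mathcal{E}_\Phi)$, with $\YY$-valued inner product $\ip{\cdot}{\cdot}_{\mathcal{K}_1}$. By Theorem \ref{thm: Steinspring}, each triple realizes $(\A\otimes\X)/\mathcal{N}_\Phi$ as a dense subspace via the linear bijections
$$
\iota:\Big[\sum_i a_i\otimes x_i\Big]\mapsto \sum_i\pi(a_i)Vx_i,\qquad
\iota_1:\Big[\sum_i a_i\otimes x_i\Big]\mapsto \sum_i\pi_1(a_i)V_1x_i,
$$
onto the dense subspaces $\pi(\A)V\X$ of $\mathcal{K}_\Phi$ and $\pi_1(\A)V_1\X$ of $\mathcal{K}_1$. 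The decisive observation is that, since both triples decompose the \emph{same} $\Phi$, both $\iota$ and $\iota_1$ pull the respective inner products back to the one canonical $\YY$-valued form
$$
\big\langle\,\big[\sum_i a_i\otimes x_i\big]\ \big|\ \big[\sum_j b_j\otimes y_j\big]\,\big\rangle_0 := \sum_{i,j}\Phi(a_i,b_j)(x_i,y_j),
$$
because $\Phi(a,b)(x_1,x_2)=\ip{\pi(a)Vx_1}{\pi(b)Vx_2}_{\mathcal{K}_\Phi}=\ip{\pi_1(a)V_1x_1}{\pi_1(b)V_1x_2}_{\mathcal{K}_1}$. Hence $U_0:=\iota_1\circ\iota^{-1}$ is automatically well defined and preserves the $\YY$-valued inner product, so in particular it is isometric for the induced norms.

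Next I would extend $U_0$ to the whole space. Since the $\YY$-valued inner product is jointly continuous, by the Cauchy--Schwarz inequality of Proposition \ref{prop: 3.6 cases}, the norm-isometry $U_0$ between dense subspaces of the complete spaces $\mathcal{K}_\Phi$ and $\mathcal{K}_1$ extends uniquely to an isometry $U:\mathcal{K}_\Phi\to\mathcal{K}_1$ which still satisfies $\ip{U\xi}{U\eta}_{\mathcal{K}_1}=\ip{\xi}{\eta}_{\mathcal{K}_\Phi}$ by continuity. As the range of $U$ is closed and contains the dense set $\pi_1(\A)V_1\X$, the map $U$ is surjective, hence unitary in the sense of the definition preceding the statement.

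Then I would read off the three required identities. Taking $a=\id$ and using $\pi(\id)=\idop_{\D_\Phi}$, $\pi_1(\id)=\idop_{\mathcal{E}_\Phi}$, one has $Vx=\iota([\id\otimes x])$, so $UVx=\iota_1([\id\otimes x])=V_1x$, i.e. $UV=V_1$. Since $U$ carries $\iota([w])$ to $\iota_1([w])$, it maps the domain $\D_\Phi=\pi(\A_0)V\X$ produced by the construction onto $\mathcal{E}_\Phi=\pi_1(\A_0)V_1\X$, giving $U\D_\Phi=\mathcal{E}_\Phi$. For the intertwining it suffices to verify $U\pi(a)=\pi_1(a)U$ on the spanning vectors $\pi(c)Vx$ with $c\in\A_0$: using property (ii) of a *-representation (Definition \ref{defn_starrepmod}), realized concretely on $\D_\Phi$ as $\pi(a)\pi(c)Vx=\pi(ac)Vx$, one computes $U\pi(a)\pi(c)Vx=U\pi(ac)Vx=\pi_1(ac)V_1x=\pi_1(a)\pi_1(c)V_1x=\pi_1(a)U\pi(c)Vx$, whence $\pi(a)=U^{-1}\pi_1(a)U$ on $\D_\Phi$ for every $a\in\A$.

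I expect the only genuinely delicate points to be the following. First, obtaining well-definedness and inner-product preservation of $U$ simultaneously; this is dissolved by factoring through the triple-independent quotient $(\A\otimes\X)/\mathcal{N}_\Phi$, on which both realizations induce the same form $\ip{\cdot}{\cdot}_0$. Second, the intertwining step: because $(\A,\A_0)$ is only a quasi *-algebra, one cannot multiply two general elements of $\A$, so one must stay on the dense domain $\D_\Phi=\pi(\A_0)V\X$ and use exclusively the legitimate products $ac$ with $c\in\A_0$ supplied by the weak partial multiplication $\pi(a)\mult\pi(c)=\pi(ac)$. Everything else is routine, relying on completeness of the $B_\YY$-spaces and continuity of the inner product.
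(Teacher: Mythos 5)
Your construction is the intended one: the paper offers no written proof of this proposition but appeals to the analogue in \cite{BIT} (Corollary 3.10), whose argument is precisely what you carry out --- factor both triples through the triple-independent quotient $(\A\otimes\X)/\mathcal{N}_\Phi$, observe that both realizations pull the inner products back to the canonical form $\ip{\cdot}{\cdot}_0$ (which settles well-definedness and inner-product preservation of $U_0$ in one stroke), extend by isometry using the Cauchy--Schwarz inequality of Proposition \ref{prop: 3.6 cases}, and obtain $UV=V_1$ from $a=\id$. Your verification of the intertwining relation on the spanning vectors $\pi(c)Vx$, $c\in\A_0$, is also correct, provided (as you note) all products are read through the weak partial multiplication $\pi(a)\mult\pi(c)=\pi(ac)$.

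There is, however, one genuine gap: the step $U\D_\Phi=\mathcal{E}_\Phi$, and with it the extension of $\pi(a)=U^{-1}\pi_1(a)U$ from the spanning set to all of $\D_\Phi$. You assert $\D_\Phi=\pi(\A_0)V\X$, but the representations in a Stinespring triple are \emph{closed} (Theorem \ref{thm: Steinspring}), so $\D_\Phi$ is the completion of the core $\pi(\A_0)V\X$ in the graph topology $t_\pi$ and is in general strictly larger than the core; mapping the core onto the core does not yet give the domain equality. The missing (standard) argument runs as follows: on the core, the already-established relations give $\|U\xi\|_{\mathcal{K}_1}+\|\pi_1(a)U\xi\|_{\mathcal{K}_1}=\|\xi\|_{\mathcal{K}_\Phi}+\|\pi(a)\xi\|_{\mathcal{K}_\Phi}$ for every $a\in\A$, so $U$ is a homeomorphism from the core with $t_\pi$ onto the core with $t_{\pi_1}$; since $\D_\Phi[t_\pi]$ and $\mathcal{E}_\Phi[t_{\pi_1}]$ are complete and the representations are the closures of their restrictions to the cores (as in the canonical construction of Theorem \ref{thm: Steinspring}), $U$ carries $\D_\Phi$ onto $\mathcal{E}_\Phi$, and the intertwining relation passes from the core to $\D_\Phi$ by $t_\pi$-continuity. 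Adding this paragraph closes the proof; without it, the claims $U\D_\Phi=\mathcal{E}_\Phi$ and $\pi(a)=U^{-1}\pi_1(a)U$ on all of $\D_\Phi$ are unsupported as written.
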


\begin{rem} By Proposition \ref{prop: unitary equivalence} all the Stinespring  triples $(\pi,V,\D_\Phi)$ decomposing $\Phi$ are
    such that $\overline{\pi(\A)V\X}=\mathcal{K}_\Phi$. 
\end{rem}

\medskip

Following \cite[Section 3.5]{stormer} we now provide a generalization  of the Radon-Nikodym theorem for completely positive sesquilinear maps with values in the space $\mathcal{S}_{\YY}(\X)$ of $\YY$-valued sesquilinear maps on $\X\times \X$.

\begin{prop}\label{prop: lemma 3.5.2}      Let  $\mathfrak{X}$ be a normed complex vector space, 
$(\A,\A_0)$ be a normed quasi *-algebra with unit $\id$ and  $\Psi,\Phi:\A\times \A\to \mathcal{S}_{\YY}(\X)$ be  bounded,  left-invariant,   completely  positive sesquilinear maps.    
     Suppose that there exists $\gamma>0$ such that, for every $N\in\mathbb{N}$, for all $a_1,\dots,a_N\in\A, x_1,\dots,x_N\in\X$  \begin{equation*}
           \sum_{i,j=1}^N\Psi(a_i,a_j)(x_i,x_j)\leq \gamma \sum_{i,j=1}^N\Phi(a_i, a_j)(x_i,x_j).
     \end{equation*}
     Let $(\pi_\Psi,V_\Psi, \D_\Psi)$ and $(\pi_\Phi,V_\Phi, \D_\Phi)$ be two Stinespring triples decomposing  $\Psi$ and $\Phi$, respectively.
Then, there exists a linear operator
$T : \pi_\Phi(\A)V_\Phi\X\to\mathcal{K}_\Psi$   such that:
\begin{itemize}
    \item[(i) ] $TV_\Phi = V_\Psi$;
    \item[(ii) ] $T\pi_\Phi(a) = \pi_\Psi(a)\mult T$ on $\pi_\Phi(\A_0)V_\Phi\X$, for every $a\in\A$, \end{itemize} and for all $a,b\in\A$, $x_1,x_2\in\X$, $$\Psi(a,b)(x_1,x_2)=\ip{T\pi_\Phi(a)V_\Phi x_1}{T\pi_\Phi(b)V_\Phi x_2}_{\mathcal{K}_\Psi}.$$ Moreover, if the norm $\|\cdot\|_\YY$ in $\YY$ is order-preserving, then $T$ is bounded, defined on $\mathcal{K}_\Phi=\overline{\pi_\Phi(\A)V_\Phi\X}$ and $\|T\|\leq\sqrt{ \gamma}$. In this case, for every $a\in\A$, $T\pi_\Phi(a)=\pi_\Psi(a)\mult T$ on the whole $\mathcal{K}_\Phi$.
\end{prop}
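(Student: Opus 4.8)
The plan is to build $T$ by hand on the canonical generators of $\mathcal{K}_\Phi$ and to let the domination hypothesis do the work of making it well defined; properties (i)--(ii) and the inner-product formula then fall out of the defining identities of the two Stinespring triples, and the order-preserving case is obtained by a norm estimate followed by a closure argument. Concretely, on the dense subspace $\pi_\Phi(\A)V_\Phi\X=(\A\otimes\X)/\mathcal{N}_\Phi$ of $\mathcal{K}_\Phi$ I would set
\[
T\Big(\sum_{i=1}^n\pi_\Phi(a_i)V_\Phi x_i\Big):=\sum_{i=1}^n\pi_\Psi(a_i)V_\Psi x_i,\qquad a_i\in\A,\ x_i\in\X .
\]

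The first and decisive step is well-definedness (linearity is then automatic). Since the $B_\YY$-norm satisfies $\big\|\sum_i\pi_\Phi(a_i)V_\Phi x_i\big\|_{\mathcal{K}_\Phi}^2=\big\|\sum_{i,j}\Phi(a_i,a_j)(x_i,x_j)\big\|_\YY$ and likewise for $\Psi$, it suffices to check that $\sum_{i,j}\Phi(a_i,a_j)(x_i,x_j)=0_\YY$ forces $\sum_{i,j}\Psi(a_i,a_j)(x_i,x_j)=0_\YY$. This is exactly where the hypothesis enters: complete positivity of $\Psi$ gives $\sum_{i,j}\Psi(a_i,a_j)(x_i,x_j)\in\KK$, while the domination inequality gives $\sum_{i,j}\Psi(a_i,a_j)(x_i,x_j)\le\gamma\sum_{i,j}\Phi(a_i,a_j)(x_i,x_j)=0_\YY$, and $\KK\cap(-\KK)=\{0\}$ closes the argument. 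With $T$ well defined, property (i) is immediate from $\pi_\Phi(\id)=\pi_\Psi(\id)=\idop$; the inner-product formula $\ip{T\pi_\Phi(a)V_\Phi x_1}{T\pi_\Phi(b)V_\Phi x_2}_{\mathcal{K}_\Psi}=\Psi(a,b)(x_1,x_2)$ is the definition of $T$ combined with the Stinespring identity for $\Psi$; and property (ii) on $\pi_\Phi(\A_0)V_\Phi\X$ follows from $\pi_\Phi(a)\mult\pi_\Phi(c)=\pi_\Phi(ac)$ and $\pi_\Psi(a)\mult\pi_\Psi(c)=\pi_\Psi(ac)$ for $c\in\A_0$ (legitimate right module products), giving $T\pi_\Phi(a)\pi_\Phi(c)V_\Phi x=\pi_\Psi(ac)V_\Psi x=\pi_\Psi(a)\mult T\,\pi_\Phi(c)V_\Phi x$.

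For the order-preserving case I would rerun the well-definedness computation keeping the inequality: from $0\le\sum_{i,j}\Psi(a_i,a_j)(x_i,x_j)\le\gamma\sum_{i,j}\Phi(a_i,a_j)(x_i,x_j)$ and order-preservation of $\|\cdot\|_\YY$ one gets $\big\|T\big(\sum_i\pi_\Phi(a_i)V_\Phi x_i\big)\big\|_{\mathcal{K}_\Psi}^2\le\gamma\big\|\sum_i\pi_\Phi(a_i)V_\Phi x_i\big\|_{\mathcal{K}_\Phi}^2$, so $\|T\|\le\sqrt\gamma$ on the dense domain and $T$ extends uniquely to a bounded operator on $\mathcal{K}_\Phi=\overline{\pi_\Phi(\A)V_\Phi\X}$, still satisfying (i) and the inner-product formula by continuity.

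The step I expect to be the main obstacle is upgrading the intertwining (ii) from $\pi_\Phi(\A_0)V_\Phi\X$ to all of $\D_\Phi=\pi_\Phi(\A)V_\Phi\X$ (and thence to $\mathcal{K}_\Phi$), since the representation operators are unbounded and $\A$ need not be complete, so one can neither commute $\pi_\Phi(a)$ past a limit nor pass to a limit of products $ac_n$ inside $\A$. My route avoids both difficulties by arguing in the $B_\YY$-spaces $\mathcal{K}_\Phi,\mathcal{K}_\Psi$ rather than in $\A$: for $b\in\A$ choose $c_n\in\A_0$ with $c_n\to b$ (density of $\A_0$), and observe that boundedness of $\Phi$ and $\Psi$ together with continuity of the module multiplication makes $\{\pi_\Phi(ac_n)V_\Phi x\}_n$ and $\{\pi_\Psi(ac_n)V_\Psi x\}_n$ Cauchy in $\mathcal{K}_\Phi$ and $\mathcal{K}_\Psi$. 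Since $\pi_\Phi(c_n)V_\Phi x\to\pi_\Phi(b)V_\Phi x$ while $\pi_\Phi(a)\pi_\Phi(c_n)V_\Phi x=\pi_\Phi(ac_n)V_\Phi x$ converges, closedness of $\pi_\Phi(a)$ (Remark \ref{rem_closable}) identifies the limit as $\pi_\Phi(a)\pi_\Phi(b)V_\Phi x$, and the analogous statement holds for $\Psi$. Applying the bounded operator $T$ and using its definition at the $c_n$-level then yields
\[
T\pi_\Phi(a)\pi_\Phi(b)V_\Phi x=\lim_n T\pi_\Phi(ac_n)V_\Phi x=\lim_n\pi_\Psi(ac_n)V_\Psi x=\pi_\Psi(a)\pi_\Psi(b)V_\Psi x=\pi_\Psi(a)\mult T\,\pi_\Phi(b)V_\Phi x,
\]
which is (ii) on all of $\D_\Phi$; boundedness of $T$ and density of $\D_\Phi$ then deliver the identity on $\mathcal{K}_\Phi$. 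The points to be checked with care are precisely these two closedness-based limit identifications and the verification that the relevant sequences are Cauchy.
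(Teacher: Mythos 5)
Your core construction coincides with the paper's proof: you define $T$ on generators by $T\sum_i\pi_\Phi(a_i)V_\Phi x_i=\sum_i\pi_\Psi(a_i)V_\Psi x_i$, justify well-definedness by combining the domination inequality with complete positivity of $\Psi$ and pointedness of $\KK$ (the paper states the implication ``$\sum_i\pi_\Phi(a_i)V_\Phi x_i=0\Rightarrow\sum_i\pi_\Psi(a_i)V_\Psi x_i=0$'' more tersely, but this is exactly the argument behind it), obtain (i) from $\pi_\Phi(\id)=\pi_\Psi(\id)=\idop$, get the inner-product formula from the definition of $T$ and the Stinespring identity for $\Psi$, verify (ii) on $\pi_\Phi(\Ao)V_\Phi\X$ via $\pi(a)\mult\pi(c)=\pi(ac)$ (the paper writes the same computation through ${\pi_\Phi(a)^\dag}^*$ and ${\pi_\Psi(a)^\dag}^*$), and in the order-preserving case derive $\|T\|\le\sqrt{\gamma}$ from the displayed inequality and extend by continuity. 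Up to this point your proposal is the paper's proof.

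Where you diverge is the final paragraph, and there is a genuine gap in it. Your Cauchy claim for $\{\pi_\Phi(ac_n)V_\Phi x\}_n$ reduces, via boundedness of $\Phi$, to $\|a(c_n-c_m)\|\to 0$, i.e., to continuity of the map $c\mapsto ac$ on $\Ao$ for \emph{fixed} $a\in\A$; but the axioms of a normed quasi *-algebra only provide continuity of $a\mapsto ac$ and $a\mapsto ca$ for fixed $c\in\Ao$, and left-invariance cannot repair the estimate since $a(c_n-c_m)$ does not lie in $\Ao$ where it would be needed. So the closedness-based identification of limits is not available as stated. Fortunately this detour is unnecessary: the proposition asserts (ii) only on $\pi_\Phi(\Ao)V_\Phi\X$, and for the last claim the paper passes to the whole of $\mathcal{K}_\Phi$ directly from the density of $\pi_\Phi(\Ao)V_\Phi\X=(\Ao\otimes\X)/\mathcal{N}_\Phi$ in $\mathcal{K}_\Phi$ together with the boundedness of $T$, never routing through $\D_\Phi=\pi_\Phi(\A)V_\Phi\X$ and the unbounded operators $\pi_\Phi(a)$. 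If you want the intertwining against arbitrary vectors of $\mathcal{K}_\Phi$ made fully precise, the clean formulation is the weak one of Remark \ref{rem_closable}: extend the identity
\begin{equation*}
\ip{T\xi}{\pi_\Psi(a^*)\eta}_{\mathcal{K}_\Psi}=\ip{(\pi_\Psi(a)\mult T)\xi}{\eta}_{\mathcal{K}_\Psi},\qquad \eta\in\D_\Psi,
\end{equation*}
from $\xi\in\pi_\Phi(\Ao)V_\Phi\X$ to $\xi\in\mathcal{K}_\Phi$ using $\|T\|\le\sqrt{\gamma}$ and the Cauchy--Schwarz inequality of Proposition \ref{prop: 3.6 cases}, rather than trying to realize $T\pi_\Phi(a)$ as a strong limit along $\pi_\Phi(ac_n)V_\Phi x$.
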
 
\begin{proof} Let $N\in\mathbb{N}$ and  $a_1, \dots,a_N\in\A$  and $x_1,\dots, x_N\in\X$, then 
\begin{align}\label{eq: ineq}
&\left<\sum_{i=1}^N\pi_\Psi(a_i)V_\Psi x_i\Bigg|\sum_{j=1}^N\pi_\Psi(a_j)V_\Psi x_j\right>_{\mathcal{K}_\Psi}= \sum_{i,j=1}^N\Psi(a_i,a_j)(x_i,x_j)\\
&\hspace{1cm}\leq\gamma\sum_{i,j=1}^N\Phi(a_i,a_j)(x_i,x_j)  \nonumber\\& \hspace{1cm}\nonumber=\gamma\left<\sum_{i=1}^N\pi_\Phi(a_i)V_\Phi x_i\Bigg|\sum_{j=1}^N\pi_\Phi(a_j)V_\Phi x_j\right>_{\mathcal{K}_\Phi}.
\end{align}
If $\sum_{i=1}^N\pi_\Phi(a_i)V_\Phi x_i=0$, then also $\sum_{i=1}^N\pi_\Psi(a_i)V_\Psi x_i=0$, hence,  we can define a  linear operator  $T$  by \begin{equation}
     \label{eq: equality}T\sum_{i=1}^N\pi_\Phi(a_i)V_\Phi x_i=\sum_{i=1}^N\pi_\Psi(a_i)V_\Psi x_i,
 \end{equation}  for all $a_i\in\A,\,x_i\in\X, i\in\{1,\dots,N\}$.
 Moreover, by taking $a=\id\in\A_0$ then $TV_\Phi=V_\Psi$, since $\pi_\Phi(\id)=\idop_{\D_\Phi}$ and $\pi_\Psi(\id)=\idop_{\D_\Psi}$.\\  Then $T\mult\pi_\Phi(a) = \pi_\Psi(a)\mult T$, for every $a\in\A$. 
 Indeed, if now $a\in\A$, $c\in\A_0$, $x\in \X$, by \eqref{eq: equality}
 \begin{equation*}
     \begin{split}
     T(\pi_\Phi(a)\mult\pi_\Phi(c))V_\Phi x&= T({\pi_\Phi(a)^\dag}^*\pi_\Phi(c))V_\Phi x=
     (T{\pi_\Phi(a)^\dag}^*)\pi_\Phi(c)V_\Phi x,    \end{split}
 \end{equation*}on the other hand,\begin{equation*}
     \begin{split}
T(\pi_\Phi(a)\mult\pi_\Phi(c))V_\Phi x&=T\pi_\Phi(ac)V_\Phi x=\pi_\Psi(ac)V_\Psi x\\&={\pi_\Psi(a)^\dag}^*\pi_\Psi(c)V_\Psi x={\pi_\Psi(a)^\dag}^*( T\pi_\Phi(c)V_\Phi x)\\&=({\pi_\Psi(a)^\dag}^* T)\pi_\Phi(c)V_\Phi x=(\pi_\Psi(a)\mult T)\pi_\Phi(c)V_\Phi x.
     \end{split}
 \end{equation*} 
Finally,  if $y_1\leq y_2$, with $y_1,y_2\in\KK$ implies that also $\|y_1\|_\YY\leq \|y_2\|_\YY$
then, by \eqref{eq: ineq}, we have that $T$ is bounded, hence it extends to the closure $\mathcal{K}_\Phi=\overline{\pi_\Phi(\A)V_\Phi\X}$ and  $\|T\|\leq\sqrt{ \gamma}$. Since  the set $\pi_\Phi(\A_0)V_\Phi\X=(\A_0\otimes\X)/\mathcal{N}_\Phi$ is dense in $\mathcal{K}_\Phi$, we conclude that $T\pi_\Phi(a)=\pi_\Psi(a)\mult T$ on $\mathcal{K}_\Phi$, for every $a\in\A$.  The last statement  follows by the very construction of $T$.\end{proof}

 A Radon Nikodym-like theorem holds for positive sesquilinear maps $\Phi,\Psi$  and not just for completely positive sesquilinear maps.
\begin{prop}     \label{prop: RN positive}
  Let $\mathfrak{X}$ be a normed vector space and let  
  the norm $\|\cdot\|_\YY$ preserve the order in $\YY$.
 Let   $(\A,\A_0)$ be a normed quasi *-algebra with unit $\id$ and  $\Phi,\Psi:\A\times \A\to \mathcal{S}_{\YY}(\X)$ be  bounded  left-invariant     positive sesquilinear maps  such that  
     \begin{equation*}
          \Psi(a,a)(x,x)\leq \gamma\, \Phi(a, a)(x,x), \quad \forall a\in\A, x\in\X
     \end{equation*} for some  $\gamma>0$. Let $\mathcal{K}_\Phi$ and $\mathcal{K}_\Psi$ be   $B_\YY$-spaces,  $\D_\Phi$ and $\D_\Psi$  dense subspaces of $\mathcal{K}_\Phi$ and $\mathcal{K}_\Psi$, respectively,  $\pi_\Phi:\A\to{\mathcal L}^\dag(\D_\Phi,\mathcal{K}_\Phi)$ and $\pi_\Psi:\A\to{\mathcal L}^\dag(\D_\Psi,\mathcal{K}_\Psi)$ closed cyclic *-representations of $\Phi$ and $\Psi$, respectively, with cyclic vectors $\xi_\vp$ and $\xi_\Psi$, respectively,  as in Proposition \ref{prop_rep}.
 Then there exists a unique operator  $T:\mathcal{K}_\Phi\to \mathcal{K}_\Psi$, 
with $\|T\| \leq  \sqrt{\gamma}$ such that for all $a,b\in \A$,  $$\Psi(a,b)=\ip{T\pi_\Phi(a)\xi_\Phi}{T\pi_\Phi(b)\xi_\Phi}_{\mathcal{K}_\Psi}.$$ 
Moreover,
$T\pi_\Phi(a)=\pi_\Psi(a)\mult T$ on $\pi_\Phi(\Ao)\xi_\vp$, for all $a\in\A$.
     \end{prop}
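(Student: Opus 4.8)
The plan is to follow the scheme of Proposition \ref{prop: lemma 3.5.2}, but with the Stinespring triples replaced by the cyclic GNS representations supplied by Proposition \ref{prop_rep}; because here $\Phi$ and $\Psi$ are genuinely $\YY$-valued, the construction is in fact lighter. Writing $\xi_\Phi=\Lambda_\Phi(\id)$, so that $\pi_\Phi(c)\xi_\Phi=\Lambda_\Phi(c)$ for $c\in\Ao$, the subspace $\pi_\Phi(\Ao)\xi_\Phi=\Lambda_\Phi(\Ao)$ is dense in $\mathcal{K}_\Phi$ by cyclicity. I would define $T$ on this dense subspace by
\[
T\bigl(\pi_\Phi(c)\xi_\Phi\bigr):=\pi_\Psi(c)\xi_\Psi,\qquad c\in\Ao,
\]
and then prove it is well defined, bounded with $\|T\|\le\sqrt\gamma$, and extends to all of $\mathcal{K}_\Phi$.

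The crucial observation — and the point where this argument is lighter than its completely positive counterpart — is that every vector of $\pi_\Phi(\Ao)\xi_\Phi$ is of the form $\pi_\Phi(c)\xi_\Phi$ for a \emph{single} $c\in\Ao$, since $\sum_i\lambda_i\,\pi_\Phi(c_i)\xi_\Phi=\pi_\Phi\bigl(\sum_i\lambda_i c_i\bigr)\xi_\Phi$ by linearity. Hence only the diagonal domination is needed: using Proposition \ref{prop_rep} to identify $\Phi(c,c)=\ip{\pi_\Phi(c)\xi_\Phi}{\pi_\Phi(c)\xi_\Phi}_{\mathcal{K}_\Phi}$ and likewise for $\Psi$, the hypothesis $\Psi(c,c)\le\gamma\,\Phi(c,c)$ together with the order-preservation of $\|\cdot\|_\YY$ gives
\[
\|\pi_\Psi(c)\xi_\Psi\|_{\mathcal{K}_\Psi}^2=\|\Psi(c,c)\|_\YY\le\gamma\,\|\Phi(c,c)\|_\YY=\gamma\,\|\pi_\Phi(c)\xi_\Phi\|_{\mathcal{K}_\Phi}^2 .
\]
In particular $\pi_\Phi(c)\xi_\Phi=0$ forces $\pi_\Psi(c)\xi_\Psi=0$, so $T$ is well defined and linear, and the same inequality yields $\|T\eta\|_{\mathcal{K}_\Psi}\le\sqrt\gamma\,\|\eta\|_{\mathcal{K}_\Phi}$ on the dense subspace. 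Thus $T$ extends uniquely to a bounded operator $T:\mathcal{K}_\Phi\to\mathcal{K}_\Psi$ with $\|T\|\le\sqrt\gamma$; the uniqueness asserted in the statement is exactly the uniqueness of this bounded extension, since the relation $T\pi_\Phi(c)\xi_\Phi=\pi_\Psi(c)\xi_\Psi$ pins $T$ down on the dense set $\pi_\Phi(\Ao)\xi_\Phi$.

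It remains to pass from $\Ao$ to $\A$ and to check the two displayed identities. Since $\Ao$ is dense in $\A$ and $\Phi,\Psi$ are bounded, for $a\in\A$ and $c_n\in\Ao$ with $c_n\to a$ one has $\|\pi_\Phi(c_n)\xi_\Phi-\pi_\Phi(a)\xi_\Phi\|_{\mathcal{K}_\Phi}^2=\|\Phi(c_n-a,c_n-a)\|_\YY\le M\|c_n-a\|^2\to0$, and similarly in $\mathcal{K}_\Psi$; continuity of $T$ then gives $T\pi_\Phi(a)\xi_\Phi=\pi_\Psi(a)\xi_\Psi$ for every $a\in\A$, whence, by Proposition \ref{prop_rep},
\[
\ip{T\pi_\Phi(a)\xi_\Phi}{T\pi_\Phi(b)\xi_\Phi}_{\mathcal{K}_\Psi}=\ip{\pi_\Psi(a)\xi_\Psi}{\pi_\Psi(b)\xi_\Psi}_{\mathcal{K}_\Psi}=\Psi(a,b),\qquad a,b\in\A .
\]
For the intertwining relation I would argue exactly as in Proposition \ref{prop: lemma 3.5.2}, using the GNS identity $\pi_\Phi(a)\Lambda_\Phi(c)=\Lambda_\Phi(ac)=\pi_\Phi(ac)\xi_\Phi$ (valid for $a\in\A$, $c\in\Ao$): for such $a,c$ one computes $T\pi_\Phi(a)\bigl(\pi_\Phi(c)\xi_\Phi\bigr)=T\pi_\Phi(ac)\xi_\Phi=\pi_\Psi(ac)\xi_\Psi=\pi_\Psi(a)\,\pi_\Psi(c)\xi_\Psi=(\pi_\Psi(a)\mult T)\bigl(\pi_\Phi(c)\xi_\Phi\bigr)$, which is the asserted equality on $\pi_\Phi(\Ao)\xi_\Phi$.

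The one genuinely delicate step is the boundedness estimate: converting the order inequality $\Psi(c,c)\le\gamma\,\Phi(c,c)$ in $\YY$ into the scalar bound $\|\Psi(c,c)\|_\YY\le\gamma\,\|\Phi(c,c)\|_\YY$ is precisely where order-preservation of $\|\cdot\|_\YY$ is indispensable — without it $T$ need not be bounded and the whole extension fails. Everything else is routine; in particular the reduction of the full ``matrix'' domination to the diagonal hypothesis, which in the completely positive setting of Proposition \ref{prop: lemma 3.5.2} must be assumed outright, is here automatic thanks to the sesquilinearity in the $\A$-variables.
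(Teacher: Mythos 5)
Your proof is correct and follows essentially the same route as the paper's: the authors likewise take the representations from Proposition \ref{prop_rep}, define $T$ by $T\pi_\Phi(a)\xi_\Phi=\pi_\Psi(a)\xi_\Psi$, get boundedness with $\|T\|\leq\sqrt{\gamma}$ from the diagonal domination combined with the order-preservation of $\|\cdot\|_\YY$, and obtain the intertwining relation exactly as in Proposition \ref{prop: lemma 3.5.2}. Your two added observations --- that only the diagonal inequality is needed because every element of $\pi_\Phi(\Ao)\xi_\Phi$ is a single vector $\pi_\Phi(c)\xi_\Phi$ by linearity, and that order-preservation is the indispensable step for boundedness --- are precisely the points the paper's one-line sketch leaves implicit, so your write-up is a faithful (and more detailed) rendering of the intended argument.
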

     \begin{proof}         
The proof goes along the lines of that  of Proposition \ref{prop: lemma 3.5.2}, by applying the representations of $\Phi$ and $\Psi$ of Proposition \ref{prop_rep} and by letting $T\pi_\Phi(a)\xi_\Phi=\pi_\Psi(a)\xi_\Psi$ for all $a\in\A$, see also Remark \ref{re: bounded pos sesq}. 
\end{proof}

We give now some examples. 

\begin{ex}
    Let $\YY$ be either a $C^*$-algebra or an ordered Banach module over a *-algebra, satisfying $(D1)-(D2)$  and let $\widetilde{\Phi}:\A\times \A\to\YY$ be a bounded left-invariant positive sesquilinear map. Define the map $\Phi:\A\times \A\to \mathcal{S}_{\YY}(\A_0)$ by $$\Phi(a,b)(c,d)=\widetilde{\Phi}(ac,bd), \quad\forall a,b\in\A, c,d\in \A_0.$$ Then $\Phi$ is  bounded, left-invariant, completely positive and sesquilinear.
\end{ex}

\begin{ex}\label{ex: esem 4.10}  Let $\MM$ be a von Neumann algebra with a finite trace $\rho$ and let $W$ and $\eta_x(W)$ be as in Example \ref{ex: 2.3}. Consider a separable Hilbert space $\H$ and the space $L^2([0,\|W\|], \B(\H))$ w.r. to the Gel'fand-Pettis integral (see \cite{Jocic} and \cite[Example 2.26]{BIvT1}). Fix  $F\in L^2([0,\|W\|], \B(\H))$ with $F(t)\geq0$ for a.e. $t\in [0,\|W\|]$ and  $T\in\B_2(\H)$ and consider $\Phi:L^2(\rho)\times L^2(\rho)\to \mathcal{S}_{\B_1(\H)}(L^\infty(\rho))$ given by $$\Phi(A,B)(X_1,X_2)=T^*\left(\int_0^{\|W\|}\rho(X_2^*B^*AX_1\eta_x(W))) F(x)dx\right)T,$$ for every $A,B\in L^2(\rho)$, $X_1,X_2\in L^2(\rho).$ Then $\Phi$ is a bounded, left-invariant, completely positive sesquilinear map.
\end{ex}


\subsection{Applications to operator-valued maps}

In the following we will denote by $\B(\X,\YY)$ the space of bounded linear operators from $\X$ into $\YY$, where $\X$ is a general Banach space.

\begin{defn}\label{defn: compl pos w.r.t Gamma} Let $\X$  be a  Banach space, $\Gamma:\X\times\X\to\X$  a  bounded 
sesquilinear map. Let $(\A,\A_0)$ be a quasi *-algebra with unit $\id$.
   A sesquilinear map $\Phi:\A\times \A\to \B(\X,\YY)$ will be called 
       {\em completely positive w.r. to $\Gamma$} if for each $N\in\mathbb{N}$ and for every $a_1,\dots,a_N\in\A$, $x_1,\dots, x_N\in\X$ it is $$\sum_{i,j=1}^N\Phi(a_i,a_j)(\Gamma(x_i,x_j) )\in\KK.$$
\end{defn}
\begin{prop}\label{prop: von Neumann}
Let   $\X$  be a  Banach space, $\Gamma:\X\times\X\to\X$  a bounded 
sesquilinear map. Let $(\A,\A_0)$ be a normed quasi *-algebra with unit $\id$ and $\Phi:\A\times\A\to \B(\X,\YY)$ be  a bounded, left-invariant,  completely positive sesquilinear map.    Then,  there exist a dense subspace $\D_\Phi$ of a     $B_\YY$-space $\mathcal{K}_\Phi$, a closed  *-representation $\pi$ of $\A$ in ${\mathcal L}\ad(\D_\Phi,\mathcal{K}_\Phi)$  and a bounded linear operator $V:\X\to \D_\Phi$ with $\|V\|^2\leq \|\Phi(\id,\id)\|\|\Gamma\|$ such that we have $$
    \Phi(a,b)(\Gamma(x_1,x_2) )=\ip{\pi(a)Vx_1}{\pi(b)Vx_2}_{\mathcal{K}_\Phi}, \quad\forall a,b\in\A,\,\, x_1,x_2\in\X.
$$ Moreover, for all $a,b\in\A$ and $x_1,x_2\in\X$, we have  \begin{multline}\label{eq: inequ 1 prop 4.12}\|\Phi(a,b)(\Gamma(x_1,x_2))\|_\YY\\\leq  \|\Phi(a,a)(\Gamma(x_1,x_1))\|_\YY^{\frac{1}{2}}\|\Phi(b,b)(\Gamma(x_2,x_2))\|_\YY^{\frac{1}{2}}. \end{multline}
If $B_1$ denotes the unit ball in $\X$ and $\Gamma(B_1 \times B_1) = B_1$, then for all $a,b\in\A$  we have \begin{equation}\label{eq: ineq 2 prop 4.12}\|\Phi(a,b)\|\leq\|\Phi(a,a)\|^{1/2}\|\Phi(b,b)\|^{1/2}.\end{equation} \end{prop}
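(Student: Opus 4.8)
The plan is to reduce everything to the Stinespring-type decomposition of Theorem~\ref{thm: Steinspring}. First I would turn the operator-valued map $\Phi$ into an $\mathcal{S}_{\YY}(\X)$-valued one by setting
$$\widehat{\Phi}(a,b)(x_1,x_2):=\Phi(a,b)(\Gamma(x_1,x_2)),\qquad a,b\in\A,\ x_1,x_2\in\X.$$
Since $\Gamma$ is sesquilinear and $\Phi(a,b)\in\B(\X,\YY)$ is linear, each $\widehat{\Phi}(a,b)$ is a $\YY$-valued sesquilinear map on $\X\times\X$, so $\widehat{\Phi}$ takes values in $\mathcal{S}_{\YY}(\X)$, and it is sesquilinear in $(a,b)$ because $\Phi$ is and post-composition with $\Gamma$ is linear on $\B(\X,\YY)$. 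The next step is to check that $\widehat{\Phi}$ inherits the hypotheses of Theorem~\ref{thm: Steinspring}: boundedness from $\|\widehat{\Phi}(a,b)(x_1,x_2)\|_\YY\le\|\Phi(a,b)\|\,\|\Gamma\|\,\|x_1\|\,\|x_2\|$; left-invariance from $\widehat{\Phi}(ac,d)(x_1,x_2)=\Phi(ac,d)(\Gamma(x_1,x_2))=\Phi(c,a^*d)(\Gamma(x_1,x_2))=\widehat{\Phi}(c,a^*d)(x_1,x_2)$; and complete positivity in the sense of Definition~\ref{defn: about Phi in SYX}, since $\sum_{i,j}\widehat{\Phi}(a_i,a_j)(x_i,x_j)=\sum_{i,j}\Phi(a_i,a_j)(\Gamma(x_i,x_j))\in\KK$ is precisely the hypothesis that $\Phi$ be completely positive with respect to $\Gamma$ (Definition~\ref{defn: compl pos w.r.t Gamma}).

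Applying Theorem~\ref{thm: Steinspring} to $\widehat{\Phi}$ then yields a $B_\YY$-space $\mathcal{K}_\Phi$, a dense subspace $\D_\Phi$, a closed $*$-representation $\pi$ of $\A$ in ${\mathcal L}\ad(\D_\Phi,\mathcal{K}_\Phi)$, and a bounded operator $V:\X\to\D_\Phi$ with
$$\Phi(a,b)(\Gamma(x_1,x_2))=\widehat{\Phi}(a,b)(x_1,x_2)=\ip{\pi(a)Vx_1}{\pi(b)Vx_2}_{\mathcal{K}_\Phi},$$
which is the first displayed identity. To obtain the sharper bound $\|V\|^2\le\|\Phi(\id,\id)\|\,\|\Gamma\|$ rather than the generic $M\|\id\|^2$ furnished by the theorem, I would track the canonical construction $Vx=\id\otimes x+\mathcal{N}_{\widehat{\Phi}}$ underlying it: then $\|Vx\|^2=\|\widehat{\Phi}(\id,\id)(x,x)\|_\YY=\|\Phi(\id,\id)(\Gamma(x,x))\|_\YY\le\|\Phi(\id,\id)\|\,\|\Gamma\|\,\|x\|^2$, using $\|\Gamma(x,x)\|\le\|\Gamma\|\,\|x\|^2$.

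Inequality~\eqref{eq: inequ 1 prop 4.12} then follows from the Cauchy-Schwarz inequality for the $\YY$-valued inner product $\ip{\cdot}{\cdot}_{\mathcal{K}_\Phi}$, valid by Proposition~\ref{prop: 3.6 cases}, applied to $\xi=\pi(a)Vx_1$ and $\eta=\pi(b)Vx_2$, since $\ip{\xi}{\xi}_{\mathcal{K}_\Phi}=\Phi(a,a)(\Gamma(x_1,x_1))$ and $\ip{\eta}{\eta}_{\mathcal{K}_\Phi}=\Phi(b,b)(\Gamma(x_2,x_2))$. Finally, under the hypothesis $\Gamma(B_1\times B_1)=B_1$, I would write $\|\Phi(a,b)\|=\sup_{y\in B_1}\|\Phi(a,b)(y)\|_\YY=\sup_{x_1,x_2\in B_1}\|\Phi(a,b)(\Gamma(x_1,x_2))\|_\YY$; since $x_i\in B_1$ forces $\Gamma(x_i,x_i)\in B_1$ and hence $\|\Phi(a,a)(\Gamma(x_1,x_1))\|_\YY\le\|\Phi(a,a)\|$ (and likewise for $b$), inequality~\eqref{eq: inequ 1 prop 4.12} bounds each term by $\|\Phi(a,a)\|^{1/2}\|\Phi(b,b)\|^{1/2}$, and passing to the supremum gives \eqref{eq: ineq 2 prop 4.12}. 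I expect the only genuinely delicate point to be this passage to the sharp constant for $\|V\|$, which forces one to use the explicit form of $V$ rather than quoting Theorem~\ref{thm: Steinspring} as a black box; everything else is a direct translation of the hypotheses and conclusions through the map $\Gamma$.
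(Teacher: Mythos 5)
Your proposal is correct and follows essentially the same route as the paper: both define the induced map $\widetilde{\Phi}(a,b)(x_1,x_2)=\Phi(a,b)(\Gamma(x_1,x_2))$ into $\mathcal{S}_{\YY}(\X)$, verify it inherits boundedness, left-invariance and complete positivity, apply Theorem~\ref{thm: Steinspring}, obtain the sharp bound $\|V\|^2\leq\|\Phi(\id,\id)\|\,\|\Gamma\|$ from $\|\ip{Vx}{Vx}_{\mathcal{K}_\Phi}\|_\YY=\|\Phi(\id,\id)(\Gamma(x,x))\|_\YY$ (the paper does this via $\pi(\id)Vx=\id\otimes x$, matching your tracking of the canonical $V$), and then deduce \eqref{eq: inequ 1 prop 4.12} from the Cauchy--Schwarz inequality of Proposition~\ref{prop: 3.6 cases} and \eqref{eq: ineq 2 prop 4.12} by taking suprema over $B_1$ using $\Gamma(B_1\times B_1)=B_1$. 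No gaps; your identification of the $\|V\|$ bound as the one point requiring the explicit form of $V$ rather than the black-box statement of Theorem~\ref{thm: Steinspring} is exactly right.
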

   \begin{proof}
       Let $\mathcal{S}_{\YY}(\X)$ be the space of all $\YY$-valued sesquilinear maps on $\X$ and $\widetilde{\Phi}:\A\times\A\to \mathcal{S}_{\YY}(\X)$ be given by: $$\widetilde{\Phi}(a,b)(x_1,x_2):=\Phi(a,b)(\Gamma(x_1, x_2)),\quad \forall a,b\in \A, \,\, x_1,x_2\in\X.$$ Then, $\widetilde{\Phi}$ is a left-invariant, completely positive sesquilinear map on $\YY$ since it inherits these properties from $\Phi$. Moreover $\widetilde{\Phi}$ is bounded. By Theorem \ref{thm: Steinspring}, the statement follows.  
       Since $\pi$ is a *-representation, $\pi(\id)=\idop_{\mathcal{K}_\Phi}$, the
identity operator on  the space $\mathcal{K}_\Phi$, hence,    
       for every $x\in\X$, we have that  \begin{equation*}
    \begin{split}
        \|\ip{Vx}{Vx}_{\mathcal{K}_\Phi}\|_\YY&= \|\ip{\pi(\id)Vx}{\pi(\id)Vx}_{\mathcal{K}_\Phi} \|_\YY=\|\ip{\id\otimes x}{\id\otimes x}_{\mathcal{K}_\Phi}\|_\YY\\&=\|(\Phi(\id,\id))(\Gamma(x,x))\|_\YY\leq \|\Phi(\id,\id)\|\|\Gamma(x,x)\|\\&\leq \|\Phi(\id,\id)\|\|\Gamma\|\|x\|^2.
    \end{split}
\end{equation*}
        The inequality \eqref{eq: inequ 1 prop 4.12} follows from  Proposition  
        \ref{prop: 3.6 cases} applied to the obtained representation, whereas  \eqref{eq: ineq 2 prop 4.12} follows by taking supremums over $B_1$ in \eqref{eq: inequ 1 prop 4.12} and using that $\Gamma (B_1\times B_1)=B_1$.
   \end{proof}

       The following example shows  maps  $\Phi$ and $\Gamma$ satisfying the hypotheses of Proposition \ref{prop: von Neumann}.
\begin{ex}\label{ex: compl positive} Take $\X=\MM$ a von Neumann algebra with a finite trace $\rho$ and $\Gamma(T_1,T_2)=T_1T_2^*$, for all $T_1,T_2\in\MM$. Since $\MM$ is unital, it follows that $\Gamma(B_1\times B_1)=B_1$. Let $W$ and $\eta_x(W)$ be as in Example \ref{ex: 2.3}. For every $T\in\MM$ and $x\in[0,\|W\|]$, let $$(\varphi(A,B)(T))(x)=\rho(A\,\eta_x(W)\,T\,\eta_x(W)\,B^*),\quad A,B\in L^2(\rho).$$ By some calculations it is not hard to see that $\varphi(A,B)(T)(\cdot)$ is a continuous function on $[0,\|W\|]$ and that $\varphi$ is a bounded, left-invariant, sesquilinear map from $L^2(\rho)\times L^2(\rho)$ into {  $\B(\MM,C([0,\|W\|]))$, which is completely positive w.r. to $\Gamma$.}
%
 Moreover, if $\widetilde{\MM}$ is another von Neumann algebra with a semi-finite trace $\widetilde{\rho}$ and if we choose some $\widetilde{W}\in\widetilde{\MM}$, with $\widetilde{W}\geq0$ and $\|\widetilde{W}\|=\|W\|$, then by the functional calculus, $\varphi(A,B)(T)(\widetilde{W})\in\widetilde{\MM}$, for all $A,B\in L^2(\rho)$ and $T\in\MM$. Hence, the map $\Phi:L^2(\rho)\times L^2(\rho)\to \B(\MM, \widetilde{\MM}^*)$  given by $$(\Phi(A,B)(T))(S)=\tilde{\rho}(S(\varphi(A,B)(T))(\widetilde{W}))$$ for all $T\in\MM$, $A,B\in L^2(\rho)$ and $S\in\widetilde{\MM}$ is  another example of a bounded, left-invariant sesquilinear map which is completely positive w.r. to $\Gamma$. Further, if  $G\in L^1(\widetilde{\rho})$, then the map $\widetilde{\Phi}: L^2(\rho)\times L^2(\rho)\to \B(\MM, L^1(\widetilde{\rho}))$ given by $$\widetilde{\Phi}(A,B)(T)=G(\varphi(A,B)(T)(\widetilde{W}))G^* $$ is also an example of bounded left-invariant sesquilinear map which is completely positive w.r. to $\Gamma$.
\end{ex}
Motivated by the notion of the classical complete positivity, we introduce now the following generalization.
\begin{defn}\label{defn: compl pos wrt psi}
    Let $\X,\Z$ be Banach spaces, $\Psi:\Z\times\X\to\YY$ and $\Phi:\A\times\A\to \B(\X,\Z)$ be sesquilinear maps.  The map $\Phi$ is said {\em completely positive w.r. to $\Psi$} if for every $N\in\mathbb{N}$ and all $a_1,\dots,a_N\in\A$, $x_1,x_2,\dots,x_N\in\X$ it holds that $$\sum_{i,j=1}^N\Psi(\Phi(a_i,a_j)x_i,x_j)\in\KK.$$
\end{defn} 

\begin{rem}
    If $\X=\Z=\H$ is a Hilbert space, $\YY=\mathbb{C}$, $\A$ is a unital $C^*$-algebra, $\Psi$ is inner product on $\H$, $\phi:\A\to\B(\H)$ is a linear map and $\Phi(a,b)=\phi(b^*a)$, for all $a,b\in\A$, then Definition \ref{defn: compl pos wrt psi} reduces to the definition of the classical complete positivity of $\phi$ and $\Phi$.
\end{rem}

\begin{prop}
    Let $\X,\Z$ be Banach spaces, $\Psi:\Z\times\X\to\YY$ and $\Phi:\A\times\A\to \B(\X,\Z)$ be sesquilinear maps. If $\Phi$ and $\Psi$ are bounded and $\Phi$ is left-invariant and completely positive w.r. to $\Psi$, then there exists a  dense subspace $\D_\Phi$ of a     $B_\YY$-space $\mathcal{K}_\Phi$, a closed  *-representation $\pi$ of $\A$ in ${\mathcal L}\ad(\D_\Phi,\mathcal{K}_\Phi)$  and a bounded linear operator $V:\X\to \D_\Phi$ with $\|V\|^2\leq \|\Phi(\id,\id)\|\|\Psi\|$ such that  we have $$\Psi(\Phi(a,b)x_1,x_2)=\ip{\pi(a)Vx_1}{\pi(b)Vx_2}_{\mathcal{K}_\Phi}, \quad\forall a,b\in\A,\,\, x_1,x_2\in\X.$$ Moreover, $$\|\Psi(\Phi(a,b)x_1,x_2)\|_\YY\leq\|\Psi(\Phi(a,a)x_1,x_2)\|_\YY^{1/2}\|\Psi(\Phi(b,b)x_1,x_2)\|_\YY^{1/2}, $$ for all $a,b\in\A,\,\, x_1,x_2\in\X$.\\
    If $\|\Psi\|=1$ and for all $z\in\Z$ we have that $\|z\|_{\Z}=\sup_{\substack{x \in \X \\ \|x \|_{\X}\leq 1}}   \|\Psi(z,x)\|_\YY$, then for all $a,b\in\A$,  we have $$\|\Phi(a,b)\|\leq\|\Phi(a,a)\|^{1/2}\|\Phi(b,b)\|^{1/2}.$$ 
\end{prop}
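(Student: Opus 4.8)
The plan is to reduce the statement to the Stinespring-type Theorem \ref{thm: Steinspring} for $\mathcal{S}_{\YY}(\X)$-valued maps, in complete analogy with the proof of Proposition \ref{prop: von Neumann}. First I would introduce the auxiliary map $\widetilde{\Phi}:\A\times\A\to \mathcal{S}_{\YY}(\X)$,
$$\widetilde{\Phi}(a,b)(x_1,x_2):=\Psi(\Phi(a,b)x_1,x_2),\qquad a,b\in\A,\ x_1,x_2\in\X.$$
For fixed $a,b$, the assignment $(x_1,x_2)\mapsto\Psi(\Phi(a,b)x_1,x_2)$ is $\YY$-valued and sesquilinear on $\X$ (linear in $x_1$ because $\Phi(a,b)$ is linear and $\Psi$ is linear in its first slot, conjugate-linear in $x_2$), so $\widetilde{\Phi}$ genuinely takes values in $\mathcal{S}_{\YY}(\X)$. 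I would then verify that $\widetilde{\Phi}$ inherits from $\Phi$ and $\Psi$ all the hypotheses of Theorem \ref{thm: Steinspring}: sesquilinearity in $(a,b)$ (from that of $\Phi$ and the linearity of $\Psi$ in the first argument), left-invariance (directly from $\Phi(ac,d)=\Phi(c,a^*d)$), and boundedness (since $\|\widetilde{\Phi}(a,b)(x_1,x_2)\|_\YY\leq\|\Psi\|\,\|\Phi(a,b)\|\,\|x_1\|\,\|x_2\|$ and $\Phi$ is bounded). The decisive point is complete positivity: by Definition \ref{defn: compl pos wrt psi}, $\Phi$ being completely positive with respect to $\Psi$ means exactly that $\sum_{i,j=1}^N\widetilde{\Phi}(a_i,a_j)(x_i,x_j)=\sum_{i,j=1}^N\Psi(\Phi(a_i,a_j)x_i,x_j)\in\KK$, which is complete positivity of $\widetilde{\Phi}$ in the sense of Definition \ref{defn: about Phi in SYX}.

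Applying Theorem \ref{thm: Steinspring} to $\widetilde{\Phi}$ then produces the $B_\YY$-space $\mathcal{K}_\Phi$, the dense subspace $\D_\Phi$, the closed $*$-representation $\pi$ and the bounded operator $V:\X\to\D_\Phi$ with $\Psi(\Phi(a,b)x_1,x_2)=\ip{\pi(a)Vx_1}{\pi(b)Vx_2}_{\mathcal{K}_\Phi}$, which is the asserted representation. To sharpen the norm bound on $V$, I would mimic the computation in Proposition \ref{prop: von Neumann}: using $\pi(\id)=\idop_{\mathcal{K}_\Phi}$ and $\|\xi\|_{\mathcal{K}_\Phi}=\sqrt{\|\ip{\xi}{\xi}_{\mathcal{K}_\Phi}\|_\YY}$,
$$\|Vx\|_{\mathcal{K}_\Phi}^2=\|\ip{\pi(\id)Vx}{\pi(\id)Vx}_{\mathcal{K}_\Phi}\|_\YY=\|\Psi(\Phi(\id,\id)x,x)\|_\YY\leq\|\Psi\|\,\|\Phi(\id,\id)\|\,\|x\|^2,$$
so that $\|V\|^2\leq\|\Phi(\id,\id)\|\,\|\Psi\|$ after taking the supremum over the unit ball of $\X$. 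The pointwise Cauchy--Schwarz inequality is then immediate from Proposition \ref{prop: 3.6 cases}: the $\YY$-valued inner product $\ip{\cdot}{\cdot}_{\mathcal{K}_\Phi}$ is a positive $\YY$-valued sesquilinear map on $\mathcal{K}_\Phi$, so applying it to the vectors $\pi(a)Vx_1$ and $\pi(b)Vx_2$ and rewriting both sides through the representation formula yields $\|\Psi(\Phi(a,b)x_1,x_2)\|_\YY\leq\|\Psi(\Phi(a,a)x_1,x_1)\|_\YY^{1/2}\,\|\Psi(\Phi(b,b)x_2,x_2)\|_\YY^{1/2}$.

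For the final operator-norm inequality I would exploit the norming hypothesis on $\Z$. Since $\|z\|_\Z=\sup_{\|x\|_\X\leq1}\|\Psi(z,x)\|_\YY$, applying it to $z=\Phi(a,b)x_1$ gives $\|\Phi(a,b)\|=\sup_{\|x_1\|\leq1}\sup_{\|x_2\|\leq1}\|\Psi(\Phi(a,b)x_1,x_2)\|_\YY$. Inserting the Cauchy--Schwarz inequality just obtained and separating the two independent suprema (the first factor depends only on $x_1$, the second only on $x_2$) bounds this by the product of $\sup_{\|x_1\|\leq1}\|\Psi(\Phi(a,a)x_1,x_1)\|_\YY^{1/2}$ and $\sup_{\|x_2\|\leq1}\|\Psi(\Phi(b,b)x_2,x_2)\|_\YY^{1/2}$; since $\|\Psi\|=1$, each of these factors is dominated by $\|\Phi(a,a)\|^{1/2}$, respectively $\|\Phi(b,b)\|^{1/2}$, giving $\|\Phi(a,b)\|\leq\|\Phi(a,a)\|^{1/2}\|\Phi(b,b)\|^{1/2}$. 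I expect this last step to be the only one requiring genuine care: the passage from the pointwise to the operator-norm inequality hinges on the supremum characterization of $\|\cdot\|_\Z$ and on the normalization $\|\Psi\|=1$, which together ensure that the diagonal suprema are controlled by the full operator norms; everything preceding it is a routine transfer of the properties of $\Phi$ and $\Psi$ to $\widetilde{\Phi}$.
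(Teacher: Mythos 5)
Your proposal is correct and follows essentially the same route as the paper's own (very terse) proof: the paper likewise introduces the induced map $\widetilde{\Phi}(a,b)(x_1,x_2)=\Psi(\Phi(a,b)x_1,x_2)$, applies Theorem \ref{thm: Steinspring} to get the representation and the first two statements, and obtains the operator-norm inequality by taking supremums over the unit ball, with your write-up merely spelling out the routine verifications (sesquilinearity, boundedness, left-invariance, complete positivity of $\widetilde{\Phi}$, and the $\|V\|^2$ bound via $\pi(\id)$) that the paper leaves implicit. Note that your Cauchy--Schwarz inequality, with diagonal terms $\Psi(\Phi(a,a)x_1,x_1)$ and $\Psi(\Phi(b,b)x_2,x_2)$, is the correct form --- and indeed the one required to separate the two suprema in the final step --- whereas the statement's right-hand side, writing $(x_1,x_2)$ in both factors, appears to be a typo.
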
\begin{proof}
    As in the proof of Proposition \ref{prop: von Neumann}, we can consider the induced sesquilinear map $\widetilde{\Phi}:\A\times\A\to \mathcal{S}_\YY(\X)$ given by $$\widetilde{\Phi}(a,b)(x_1,x_2)=\Psi(\Phi(a,b)x_1,x_2), \quad \forall x_1,x_2\in\X$$ and deduce the first two statements by applying Theorem \ref{thm: Steinspring} and the obtained *-representation. The last inequality in the statements follows from the first one, by taking the supremums over the unit ball in $\Z$.
\end{proof}

\begin{ex}
     Let $\MM$ be a von Neumann algebra with a finite trace $\rho$ and let $W\in L^\infty(\rho)$. Put $\Z=\YY=L^1(\rho)$, $\X=L^\infty(\rho)$. If $\Psi:\Z\times\X\to\YY$ is given by $\Psi(Z,X)=X^*Z$, then $\Psi$ is bounded, $\|\Psi\|=1$ and $\|Z\|_{\Z}=\sup_{\substack{X \in \X \\ \|X \|_{\X}\leq 1}}   \|\Psi(Z,X)\|_\YY$, due to the fact that $L^\infty(\rho)\subset L^1(\rho)$ and that $L^\infty(\rho)=\MM$ is unital. Let $\Phi:L^2(\rho)\times L^2(\rho)\to \B(\X,\Z)$ be given by $$\Phi(A,B)(X)=W^*B^*AW X, \quad \forall A,B\in L^2(\rho), \,X\in\X=L^\infty(\rho).$$  Then $\Phi$ is a bounded, left-invariant  sesquilinear map which is completely positive w.r. to $\Psi$.
\end{ex}

\smallskip

\noindent{\bf{Acknowledgements:}} GB and CT acknowledge that this work has been carried out within the activities of Gruppo UMI Teoria dell’Appros-simazione e Applicazioni and of GNAMPA of the INdAM. SI is supported by the Ministry of Science, Technological Development and Innovations, Republic of Serbia, grant no. 451-03-66/2024-03/200029. 

\bibliographystyle{amsplain}

\begin{thebibliography}{99}
	\bibitem{ait_book} J.-P. Antoine, A. Inoue, C. Trapani, {\em Partial *-algebras
		and their Operator Realizations}, Kluwer, Dordrecht, 2002.
\bibitem{Arveson} W.B. Arveson, {\em Subalgebras of $C^*$-algebras}, Acta Math. {\bf 123}  (1969) 141-224.
        
	\bibitem{Asad} M.B. Asad,  {\em Stinespring’s theorem for Hilbert $C^*$-modules}, J. Oper. Theory {\bf 62} (2009) 235-238. 
	\bibitem{BIT} F. Bagarello, A. Inoue, C. Trapani, {\em Completely Positive Invariant Conjugate-Bilinear Maps in Partial *-Algebras},  Z. Anal. Anwend.
	{\bf  26} (2007), 313-330.
	\bibitem{BIvT1} G. Bellomonte, S. Ivkovi\'{c}, C. Trapani, {\it  GNS Construction for $C^*$-Valued Positive Sesquilinear Maps on a quasi *-algebra},   Mediterr. J. Math. {\bf 21}   (2024) art. n.  166 (22 pp.).
	\bibitem{BDIv} G. Bellomonte, B. Djordjevi\'c, S. Ivkovi\'{c}, {\it  On representations and topological aspects of positive
		maps on non-unital quasi *- algebras},   Positivity {\bf 28}   (2024) art. n.  66 (29 pp.).	
\bibitem{BKMS} A.M. Bikchentaev, F. Kittaneh , M.S Moslehian , Y. Seo, Trace Inequalities:
For Matrices and Hilbert Space Operators, {\em Forum for Interdisciplinary Mathematics (FFIM)}, Springer, 2024.
        \bibitem{bhatt}S.J. Bhatt,  {\it Stinespring representability and Kadison’s Schwarz inequality in non-unital Banach star algebras and applications}. Proc. Indian Acad. Sci. (Math. Sci.) {\bf 108} (1998) 283--303.
\bibitem{DKM} A. Dadkhah, M. Kian and M.S. Moslehian, {\em Decomposition of tracial positive maps and applications in quantum information}, { Analysis and Mathematical Physics } {\bf 14}, art. n. 48, (2024).

        
	\bibitem{FT_book} M. Fragoulopoulou, C. Trapani, {\em Locally Convex Quasi *-algebras and their Representations}, Lecture Notes in Mathematics 2257, Springer, 2020.

    \bibitem{GMX} M. Ghaemi, M.S. Moslehian, and Q. Xu, {\em Kolmogorov decomposition of conditionally completely positive definite kernels}. Positivity {\bf 25} (2021) 515--530.
    \bibitem{heo}J. Heo, {\em Completely -multi-positive linear maps and representations on Hilbert $C^*$-modules}, J. Operator Th. {\bf 41} (1999) 3-22.
     \bibitem{Jocic} D.R. Joci\'{c},
	{\em	Cauchy–Schwarz norm inequalities for weak*-integrals of operator valued functions},
	Journal of Functional Analysis,
	{\bf 218}, 318-346  (2005).
    \bibitem{Joita} M. Joi\c{t}a, {\em Comparison of completely positive maps on Hilbert $C^*$-modules},
J. Math. Anal.  Appl.
{\bf  393} (2012) 644-650.

\bibitem{KK} K. Karimi, K. Sharifi, {\em Completely Positive Maps on Hilbert Modules over Pro-$C^*$-Algebras}, Bulletin Math\'ematique de La Soci\'et\'e Des Sciences Math\'ematiques de Roumanie {\bf 60}(108),  (2017) 181-193.
	

   \bibitem{KMN} M. Kian, M.S. Moslehian and R. Nakamoto,   {\it Asymmetric Choi–Davis inequalities}. Linear and Multilinear Algebra, {\bf 70} (2020) 3287–3300. https://doi.org/10.1080/03081087.2020.1836115
	\bibitem{Lance} E.C. Lance, {\em Hilbert $C^*$-modules: A toolkit for operator algebraists}, London Mathematical Society Lecture Note Series vol. 210, Cambridge University Press (1995).
	 \bibitem{MT} V. Manuilov and E.V. Troitsky, {\em Hilbert $C^*$-modules}, AMS Translations Math. Monographs vol. 226, (2005).
	
	
    \bibitem{MJJ} M.S. Moslehian, M. Joita, and U.C. Ji, \textit{ KSGNS Type Constructions for 
-Completely Positive Maps on Krein 
-Modules}. Complex Anal. Oper. Theory {\bf 10}(2016) 617--638.
	 \bibitem{paulsen} V. Paulsen, {\em Completely Bounded Maps and Operator Algebras}, 
	 Cambridge University Press, 2003.

      \bibitem{PY} J-P. Pellonp\"a\"a, K. Ylinen, {\em Modules, completely positive maps, and a generalized KSGNS construction}, Positivity {\bf 15} (2011) 509-525. 
	\bibitem{schm_book}{ K. Schm\"udgen}, {\it Unbounded operator algebras
		and Representation theory}, Birkh{\"a}user, Basel, 1990.
	  \bibitem{stinespring} W.F. Stinespring,  {\it Positive functions on $C^*$-algebras}, Proc. Amer. Math. Soc.  {\bf 6} (1955), 211-216.
 \bibitem{stormer} E. St{\o}rmer, {\it Positive linear maps on Operator algebras}, Springer Monographs in Mathematics, Heidelberg, 2013.
\end{thebibliography}

\end{document}